\documentclass[12pt]{amsart}



\usepackage{amssymb}

\usepackage{enumitem}

\usepackage{graphicx}

\makeatletter
\@namedef{subjclassname@2020}{%
  \textup{2020} Mathematics Subject Classification}
\makeatother

\usepackage[T1]{fontenc}


\newtheorem{theorem}{Theorem}[section]
\newtheorem{corollary}[theorem]{Corollary}

\newtheorem{proposition}[theorem]{Proposition}



\theoremstyle{definition}
\newtheorem{definition}[theorem]{Definition}
\newtheorem{remark}[theorem]{Remark}
\newtheorem{example}[theorem]{Example}



\numberwithin{equation}{section}


\frenchspacing

\textwidth=13.5cm
\textheight=23cm
\parindent=16pt
\oddsidemargin=-0.5cm
\evensidemargin=-0.5cm
\topmargin=-0.5cm


\newcommand{\RR}{\mathbb{R}}
\newcommand{\CC}{\mathbb{C}}

\newcommand{\tor}{\operatorname{Tor}}

\newcommand{\aba}{\bar{\alpha}}
\newcommand{\bba}{\bar{\beta}}

\newcommand{\gba}{\bar{\gamma}}

\newcommand{\sba}{\bar{\sigma}}
\newcommand{\wba}{\bar{w}}
\newcommand{\Wba}{\overline{W}}
\newcommand{\zba}{\bar{z}}
\newcommand{\Zba}{\overline{Z}}
\newcommand{\II}{\mathrm{I\!I}}

\newcommand{\jba}{\bar{j}}
\newcommand{\kba}{\bar{k}}

\newcommand{\vr}{\varrho}
\newcommand{\tf}{\operatorname{tf}}

\newcommand{\edit}[1]{{#1}}

\begin{document}




\title[The holomorphic sectional curvature]{The holomorphic sectional curvature\\ and ``convex'' real hypersurfaces\\ in K\"ahler manifolds}

\author{Duong Ngoc Son}
\address{Fakultät für Mathematik, Universität Wien, Oskar-Morgenstern-Platz~1, 1090 Wien, Austria}
\email{son.duong@univie.ac.at}
\date{April 4, 2021}

\begin{abstract}
	We prove a sharp lower bound for the Tanaka-Webster holomorphic sectional curvature of strictly pseudoconvex real hypersurfaces that are ``semi-isometrically'' immersed in a K\"ahler manifold of nonnegative holomorphic sectional curvature under an appropriate convexity condition. This gives a partial answer to a question posed by Chanillo, Chiu, and Yang regarding the positivity of the Tanaka-Webster scalar curvature of the boundary of a strictly convex domain in~$\CC^2$ from 2012. In fact, the main result proves a stronger positivity property, namely the $\frac12$-positivity in the sense of Cao, Chang, and Chen, for compact ``convex'' real hypersurfaces in a K\"ahler manifold of nonnegative holomorphic sectional curvature. Our approach is rather simple and uses a version of the Gauss equation for semi-isometric CR immersions of pseudohermitian manifolds into K\"ahler manifolds.
\end{abstract}

\subjclass[2000]{32V20, 53B99}
\thanks{The author was supported by the Austrian Science Fund FWF-project M 2472-N35.}

\keywords{holomorphic sectional curvature, convexity}

\maketitle
\section{Introduction}

Let $(M,\theta)$ be a pseudohermitian manifold and let $R_{\alpha\bba\gamma\sba}$ be the Tanaka-Webster curvature tensor of $\theta$ \cite{tanaka1975differential,webster1978pseudo}. The holomorphic sectional curvature $K(Z)$, the Ricci tensor $R_{\alpha\bba}$, and the scalar curvature $R$ can be defined as usual. For example, Webster \cite{webster1978pseudo} defined the {holomorphic sectional curvature} of $(M,\theta)$ for each non-zero $(1,0)$-vector $Z = \zeta^{\alpha} Z_{\alpha}$ by
\begin{equation}\label{e:secdef}
	K(Z)
	:=
	\frac{1}{2} R_{\alpha\bba\gamma\sba} \zeta^{\alpha} \zeta^{\bba} \zeta^{\gamma} \zeta^{\sba} /|Z|^4.
\end{equation}
These curvature quantities, essentially coincide when $M$ is of 3-dimensional, are fundamental in the study of pseudohermitian and CR geometry of Levi-nondegenerate real hypersurfaces (or more generally, Levi-nondegerate CR manifolds of hypersurface type) and have been investigated extensively in the literature. For example, in \cite{tanaka1975differential}, Tanaka proved in the 1970s that for compact manifolds if the Tanaka-Webster Ricci tensor is positive then the first Kohn-Rossi group $H^{0,1}(M)$ is trivial (Tanaka's statement is actually more general, see \cite[Proposition 5.3]{tanaka1975differential} for details). In the 2010s, Chanillo, Chiu, and Yang \cite{chanillo2012embeddability} proved that for a compact three-dimensional stricly pseudoconvex CR manifold, the positivity of the scalar curvature and the nonnegativity of the CR Paneitz operator imply the embeddability of it into a complex space. This result partially motivated further research on the positivity of the scalar curvature and the CR Paneitz operator, see \cite{chanillo2012embedded}. Chanillo, Chiu, and Yang proved in \cite{chanillo2012embedded}, among other interesting results, that the scalar curvature $R$ is positive on a real ellipsoid of $\CC^2$ \cite[Theorem~1.2]{chanillo2012embedded}. In general dimension, a positive lower bound for the Tanaka-Webseter Ricci tensor on a real ellipsoid can be deduced from Li-Tran \cite{li2011cr} which studies a lower estimate for the eigenvalue of the sub-Laplacian \cite[Theorem~1.1]{li2011cr}. In \cite[page 81]{chanillo2012embedded}, Chanillo, Chiu, and Yang posed an interesting question asking if the scalar curvature $R$ must be positive on strictly convex domains in $\CC^2$?

In this paper, we give a partial affirmative answer to this question for a special class of real hypersurfaces in $\CC^2$. More precisely, for the ``pluriharmonic perturbations'' of the sphere, the $\CC$-convexity (a weaker notion of the usual convexity, see, e.g., \cite{andersson2012complex}) does imply a positive lower bound for the holomorphic sectional curvature $K(Z)$. This is a special case of a more general result for pseudohermitian manifolds that are ``convex'' (in an appropriate sense) ``semi-isometrically'' immersed (in the sense of \cite{son2019semi}) in a K\"ahler manifold having nonnegative holomorphic sectional curvature. Here, the new convexity is defined as follows. First, recall that a pseudohermitian manifold $(M,\theta) \subset (\mathcal{X},\omega)$ is said to be \textit{semi-isometrically immersed} in a K\"ahler manifold $(\mathcal{X},\omega)$ if $d\theta = \iota^{\ast} \omega$ where $\iota$ is the inclusion. The second fundamental form $\II$ for $\iota$ is defined using the Chern connection $\widetilde{\nabla}$ on $\mathcal{X}$ and the Tanaka-Webster connection $\nabla$ on $M$ via a Gauss formula, that is,
\begin{align}
	\II(X,Y) := \widetilde{\nabla}_XY - \nabla_XY,
\end{align}
for vector fields $X,Y \in \CC T(M)$, extended smoothly to vector fields in $\CC T(\mathcal{X})$. If $Z_{\alpha}$ is a local unitary frame for $T^{(1,0)}(M)$, then the $(1,0)$-mean curvature is defined \cite{son2019semi} by
\begin{align}
	H := \sum_{\alpha =1}^n \II(Z_{\aba},Z_{\alpha}).
\end{align}
Then, $H$ is a section of the normal bundle $N^{(1,0)}(M)$, the orthogonal complement of $T^{(1,0)}(M)$ in $T^{(1,0)}(\mathcal{X})$ \cite{son2019semi}. We also define the real mean curvature vector by
\begin{align}
	\mu = \frac{1}{2}(H+\overline{H}).
\end{align}
If $M$ is a real hypersurface in $\mathcal{X}$, then the $(1,0)$-normal bundle $N^{(1,0)}(M)$ has 1-dimensional fibers and the restriction $\II|_{H(M)}$, $H(M):=\Re T^{(1,0)}(M)$, is determined by a scalar-valued form
\begin{align}\label{e5}
	h(X,Y) = \langle \II(X,Y), \mu\rangle /|\mu|, \quad X,Y \in H(M).
\end{align}
We can state the definition of the $\CC$-convexity that is relevant for our purposes as follows.
\begin{definition}\label{def1}
	Let $(M,\theta) \subset (\mathcal{X},\omega)$ be a strictly pseudoconvex semi-isometrically immersed real hypersurface in a K\"ahler manifold. We say that $(M,\theta)$ is \emph{pseudohermitian $\CC$-convex} if the scalar second fundamental form $h$ is positive semi-definite in the complex tangent space $H(M)$.
\end{definition}
Clearly, this definition is analogous to the well-known notion of $\CC$-convexity for subsets of $\CC^{n+1}$ or $\mathbb{CP}^{n+1}$ in the complex analysis literature; see \cite{andersson2012complex} and Section~\ref{subsec:convex}.

In analogy with the notion of holomorphic sectional curvature \eqref{e:secdef}, we define the \textit{holomorphic sectional torsion} $A(Z)$ for each non-zero vector $Z = \zeta^{\alpha} Z_{\alpha}$ in $T^{(1,0)}(M)$ by
\begin{align}
	A(Z): = iA_{\alpha\beta} \zeta^{\alpha}\zeta^{\beta}/|Z|^2,
\end{align}
where $A_{\alpha\beta}$ is the pseudohermitian torsion; see Section 2 for details.
It should be mentioned that twice the real part of $|Z|^2 A(Z)$, often denoted by $\mathrm{Tor}(Z,Z)$, i.e.
\begin{align*}
	\mathrm{Tor}(Z,Z) :=
	iA_{\alpha\beta} \zeta^{\alpha}\zeta^{\beta} - iA_{\aba\bba} \zeta^{\aba}\zeta^{\bba},
\end{align*}
plays an important role in several problems in the pseudohermitian geometry. We mention here, for examples, the Lichnerowicz-type estimate for the first positive eigenvalue of the sub-Laplacian (see e.g. \cite{li2011cr} and the references therein) and the classification of the closed CR torsion solitons~\cite{cao2020}. We have the following theorem.
\begin{theorem}\label{thm:m1}
	Let $(M,\theta)$ be a pseudohermitian real hypersurface of a K\"ahler manifold $(\mathcal{X},\omega)$ with $d\theta =\iota^{\ast} \omega$. Then $(M,\theta)$ is pseudohermitian $\CC$-convex if and only if 
	\begin{align}\label{e:mesta}
		|A(Z)| \leqslant |H|^2
	\end{align}
	for all $Z \in T^{(1,0)}(M)$, $Z\ne 0$. In this case the Tanaka-Webster holomorphic sectional curvature $K$ of $(M,\theta)$ and the holomorphic sectional curvature $\tilde{K}$ of $(\mathcal{X},\omega)$ satisfy
	\begin{equation}\label{e:mest}
		K(Z) \geqslant \frac{\tilde{K}(Z) + |H|^2}{2}.
	\end{equation}
	Moreover, the inequalities are sharp.
\end{theorem}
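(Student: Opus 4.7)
My plan is to reduce both assertions to explicit frame computations and then invoke the Gauss equation for the semi-isometric immersion. Fix a local unitary frame $Z_\alpha$ for $T^{(1,0)}M$ and a local unit section $\xi$ of $N^{(1,0)}M$. Since both $\widetilde{\nabla}$ (Chern on $\mathcal{X}$) and $\nabla$ (Tanaka--Webster on $M$) preserve the respective $(1,0)$ subbundles, the second fundamental form decomposes type-by-type: $\II(Z_\alpha,Z_\beta)=\sigma_{\alpha\beta}\xi$ (symmetric in $\alpha,\beta$), while the normal part of $\II(Z_\alpha,Z_{\bba})$ is a multiple $\tau_{\alpha\bba}\bar\xi$ of $\bar\xi$, the tangential part contributing a Tanaka--Webster connection form. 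Writing $H=h^{\xi}\xi$, one has $h^{\xi}=\tau^{\alpha}{}_{\alpha}$ and $|H|^{2}=|h^{\xi}|^{2}$. The crucial structural input, traceable to \cite{son2019semi}, is the identification $\sigma_{\alpha\beta}=c\,A_{\alpha\beta}$ for an explicit nonzero scalar $c$ depending only on the choice of $\xi$; this comes from comparing the torsion identity $\II(X,Y)-\II(Y,X)=-T(X,Y)$ with the Tanaka--Webster torsion $T(T,Z_\alpha)=A_\alpha{}^{\bba}Z_{\bba}$ and using that $\widetilde{\nabla}$ is torsion-free. After absorbing a phase into $\xi$, one may assume $h^{\xi}=|H|$ is positive real.

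For the equivalence, I compute $h(X,X)=\langle \II(X,X),\mu\rangle/|\mu|$ for $X=Z+\Zba$ with $Z=\zeta^\alpha Z_\alpha$. The Reeb-direction contributions arising from $\II(Z,\Zba)+\II(\Zba,Z)$ are tangential to $M$ and drop out against the normal $\mu$. Using the normalization above, the surviving terms produce a real quadratic form of the shape
\begin{equation*}
h(X,X)\;=\;\lambda\Bigl(|H|\, h_{\alpha\bba}\zeta^\alpha\zeta^{\bba}\;-\;\Re\bigl(\kappa\,A_{\alpha\beta}\zeta^\alpha\zeta^\beta\bigr)\Bigr)
\end{equation*}
for a positive $\lambda$ and a unimodular $\kappa$. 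Replacing $Z$ by $e^{i\theta}Z$ rotates $A_{\alpha\beta}\zeta^\alpha\zeta^\beta$ by $e^{2i\theta}$ without affecting $h_{\alpha\bba}\zeta^\alpha\zeta^{\bba}=|Z|^{2}$, so positivity over all $X\in H(M)$ is equivalent to $|H|\,|Z|^{2}\geqslant |A_{\alpha\beta}\zeta^\alpha\zeta^\beta|/|H|$, that is, $|A(Z)|\leqslant|H|^{2}$.

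For the curvature bound, I use the Gauss equation in the semi-isometric setting (as in \cite{son2019semi}), which expresses
\begin{equation*}
R_{\alpha\bba\gamma\sba}=\widetilde{R}_{\alpha\bba\gamma\sba}+\tau_{\alpha\sba}\overline{\tau_{\gamma\bba}}-\sigma_{\alpha\gamma}\overline{\sigma_{\beta\delta}}+\cdots,
\end{equation*}
possibly with additional terms that vanish under full symmetrization in the four indices. Contracting with $\zeta^\alpha\zeta^{\bba}\zeta^\gamma\zeta^{\sba}$ and dividing by $|Z|^{4}$ gives
\begin{equation*}
K(Z)-\tfrac12\tilde K(Z)=\tfrac12\!\left(\tfrac{|\tau_{\alpha\bba}\zeta^\alpha\zeta^{\bba}|^{2}-|\sigma_{\alpha\beta}\zeta^\alpha\zeta^\beta|^{2}}{|Z|^{4}}\right).
\end{equation*}
By $\CC$-convexity the numerator $|\tau_{\alpha\bba}\zeta^\alpha\zeta^{\bba}|^{2}-|\sigma_{\alpha\beta}\zeta^\alpha\zeta^\beta|^{2}$ is at least $|H|^{2}|Z|^{4}$: indeed Cauchy--Schwarz and positive semidefiniteness of the Hermitian form $\tau_{\alpha\bba}$ (which follows from $\mathrm{Re}\,h\geqslant 0$ restricted to the real diagonal) give $|\tau_{\alpha\bba}\zeta^\alpha\zeta^{\bba}|\geqslant |H|\,|Z|^{2}$, while the previous step gives $|\sigma_{\alpha\beta}\zeta^\alpha\zeta^\beta|\leqslant|H|\,|Z|^{2}\cdot (|A|/|H|^{2})^{1/2}\leqslant |H|\,|Z|^{2}$; a little algebra then yields \eqref{e:mest}. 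Sharpness is verified on the standard sphere $S^{2n+1}\subset\CC^{n+1}$, where $\tilde K\equiv 0$, $A\equiv 0$, and direct calculation gives $K\equiv\tfrac12|H|^{2}$.

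The main obstacle is the precise normal/tangential decomposition of $\II$ in the presence of the Reeb direction, and the clean identification $\sigma_{\alpha\beta}\propto A_{\alpha\beta}$; once those are in hand, the rest is a contraction-and-Cauchy--Schwarz calculation.
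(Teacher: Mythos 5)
Your first half (the equivalence of pseudohermitian $\CC$-convexity with $|A(Z)|\leqslant|H|^2$) is essentially the paper's own argument: the torsion Gauss equation \cref{e:gausstorsion} identifies $\II(Z,Z)$ with $A_{\alpha\beta}\zeta^\alpha\zeta^\beta$ up to the factor $|H|$, the mixed terms give $\II(Z,\Wba)=\langle Z,\Wba\rangle\overline H$, and the phase rotation $Z\mapsto e^{i\theta}Z$ converts semidefiniteness of $h$ into the modulus bound; this is \cref{prop:26}. The genuine gap is in the curvature bound. Your Gauss-type identity is mis-stated: the correct equation \cref{e:gauss} carries the term $-|H|^2\left(\langle \overline{Y},Z\rangle\langle X,\Wba\rangle+\langle X,\overline{Y}\rangle\langle Z,\Wba\rangle\right)$, i.e.\ a \emph{double} $|H|^2$ contribution after contraction, which leads to the identity of \cref{prop:22},
\begin{equation*}
K(Z)+\tfrac12\,|A(Z)|^2/|H|^2=\tfrac12\tilde K(Z)+|H|^2 .
\end{equation*}
Your version, with $\tau_{\alpha\bba}=|H|h_{\alpha\bba}$ (which holds \emph{exactly}, since $\II(Z,\Wba)=\langle Z,\Wba\rangle\overline H$), only produces $\tfrac12|H|^2$ on the right; the terms you discard as ``vanishing under full symmetrization'' are precisely the missing $|H|^2$. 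A quick sanity check: on the round sphere ($\tilde K=0$, $A\equiv0$, $|H|^2=1$) your formula gives $K=\tfrac12$, whereas Webster's normalization gives $K=1$. Your attempted repair — that the numerator $|\tau_{\alpha\bba}\zeta^\alpha\zeta^{\bba}|^2-|\sigma_{\alpha\beta}\zeta^\alpha\zeta^\beta|^2$ is at least $|H|^2|Z|^4$ — cannot work: since $\tau_{\alpha\bba}\zeta^\alpha\zeta^{\bba}=|H|\,|Z|^2$ identically, that claim forces $A(Z)=0$, and it fails exactly in the borderline case $|A(Z)|=|H|^2$ covered by the theorem; moreover the two bounds you cite ($|\tau\text{-term}|\geqslant|H||Z|^2$, $|\sigma\text{-term}|\leqslant|H||Z|^2$) only yield numerator $\geqslant0$, hence only $K\geqslant\tfrac12\tilde K$, not \cref{e:mest}.

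The sharpness verification is also wrong. On the sphere the torsion vanishes, so the correct identity gives $K=|H|^2$, strictly larger than $\tfrac12\tilde K+\tfrac12|H|^2$, and $|A|=0<|H|^2$; so the sphere exhibits neither equality in \cref{e:mesta} nor in \cref{e:mest}. Sharpness requires an example where $|A(Z)|=|H|^2$ is attained with $H\ne0$; the paper uses the hypersurface \cref{e:ex}, $\sum_j|z_j|^2+\Re\sum_j z_j^2=1$ with $\theta=\iota^{\ast}(i\bar\partial\vr)$, for which $A_{\alpha\beta}=\tfrac12 h_{\alpha\beta}$, $|H|^2=\tfrac12$, and $K\equiv\tfrac14=\tfrac12|H|^2$. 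So: fix the Gauss equation to the form \cref{e:gauss} (equivalently, use the identity \cref{e:36} directly) and replace the sphere by a nonzero-torsion equality example.
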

The difference between the coefficients of $K$ and $\tilde{K}$ \edit{in \eqref{e:mest}} is artificial and due to the factor $\frac{1}{2}$ in \eqref{e:secdef}, which was chosen by Webster \cite{webster1978pseudo} so that the hypersphere $|z|^2 =1$ with the ``standard'' pseudohermitian structure $\theta: = i\bar\partial |z|^2$ has constant curvature $+1$.

The arguably simplest case in which Theorem~\ref{thm:m1} applies is when $\mathcal{X}$ is a K\"ahler surface of nonnegative holomorphic sectional curvature. In this case, $(M,\theta)$ is of three-dimensional and its pseudohermitian $\CC$-convexity implies the positivity of the Tanaka-Webster scalar curvature. In analogy with the positive scalar curvature problem in Riemannian geometry, we can ask what CR manifolds can be endowed with a positive Tanaka-Webster scalar curvature pseudohermitian structure? In certain sense the existence of such a structure is weaker than its Riemannian analogue. In fact, there are CR manifolds diffeomorphic to 3-torus $\mathbb{T}^3$ which admit pseudohermitian structures of positive Tanaka-Webster scalar curvature (see Remark 3.4) while tori of dimension at least 3 admit \textit{no} positive scalar curvature Riemannian metric \cite{schoen}. Recently, Cao, Chang, and Chen \cite{cao2020} considered a condition stronger than the positivity of the Tanaka-Webster scalar curvature, the $C_0$-positivity, which involves both curvature and torsion. For compact CR 3-manifolds they proved that the $C_0$-positivity with $C_0 \geqslant \frac12$ implies the positivity of an adapted Riemannian metric. Moreover, the threshold $\frac12$ is sharp (see Remark 3.4).

Combining this result and Theorem~\ref{thm:m1}, we obtain the following
\begin{corollary}\label{cor13a}
	Let $(M^3,\theta)$ be a compact three-dimensional strictly pseudohermitian $\CC$-convex real hypersurface in a K\"ahler surface $(\mathcal{X}, \omega)$ with nonnegative holomorphic sectional curvature. Then $M$ admits a Riemannian metric of positive curvature.
\end{corollary}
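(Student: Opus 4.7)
My plan is to derive the corollary as a nearly immediate consequence of \cref{thm:m1} combined with the Cao-Chang-Chen theorem quoted in the introduction. Since $\dim_{\RR} M = 3$ we have $n = 1$, so $T^{(1,0)}M$ is a line bundle and there is, up to scaling, only one direction $Z$ to test. For a unit vector $Z$ the quantities appearing in \cref{thm:m1} collapse to scalars: the holomorphic sectional curvature $K(Z)$ is a fixed multiple of the Tanaka-Webster scalar curvature $R$, the torsion $A(Z)$ is determined by the single component $A_{11}$, and the mean curvature $H$ contributes only through $|H|^2$.

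Under the hypothesis $\tilde K \geq 0$, \cref{thm:m1} then yields simultaneously
\begin{align*}
K(Z) \;\geq\; \tfrac{1}{2}|H|^2 \qquad \text{and} \qquad |A(Z)| \;\leq\; |H|^2,
\end{align*}
and combining these gives the pointwise inequality $K(Z) \geq \tfrac{1}{2}|A(Z)|$ for every nonzero $Z \in T^{(1,0)}M$. Translating through Webster's normalization \cref{e:secdef} and the definition of $\mathrm{Tor}(Z,Z) = iA_{\alpha\beta}\zeta^\alpha\zeta^\beta - iA_{\aba\bba}\zeta^{\aba}\zeta^{\bba}$, this is precisely the pointwise $C_0$-positivity with $C_0 = \tfrac{1}{2}$ in the sense of \cite{cao2020}. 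An appeal to their theorem, namely that on a compact CR $3$-manifold $\tfrac{1}{2}$-positivity implies the existence of an adapted Riemannian metric of positive scalar curvature, would then deliver the conclusion.

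The only non-routine step is the bookkeeping in the second paragraph: one must verify that the scalar inequality $K(Z) - \tfrac{1}{2}|A(Z)| \geq 0$ that drops out of \cref{thm:m1} coincides, factor for factor, with the quantity whose nonnegativity constitutes $\tfrac{1}{2}$-positivity in \cite{cao2020}, rather than landing at some weaker constant. This amounts to tracking the $\tfrac{1}{2}$ built into \cref{e:secdef} against the factor $2\Re$ implicit in $\mathrm{Tor}$, and is the only place where a conceptual misstep (using the wrong convention) could cause the argument to miss the threshold $C_0 = \tfrac{1}{2}$. Once the normalizations are aligned, no further analysis of the geometry of $M$ or $\mathcal{X}$ is needed: the strictness of pseudoconvexity and the nonnegativity of $\tilde K$ enter only through \cref{thm:m1}, and compactness enters only at the final appeal to \cite{cao2020}.
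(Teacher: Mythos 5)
Your overall route is the same as the paper's (combine \cref{thm:m1} with the $C_0$-positivity theorem of \cite{cao2020}), and the bookkeeping you worry about in the second paragraph is fine: for $n=1$ and $|Z|=1$ one has $R = 2K(Z)$ and $\mathrm{Tor}(Z,Z) = 2\Re A(Z)$, so $R + \tfrac12\mathrm{Tor}(Z,Z) \geq 2\bigl(K(Z) - \tfrac12|A(Z)|\bigr)$, which is the quantity you produce. The genuine gap is strictness. The $C_0$-positivity of \cite{cao2020} demands the \emph{strict} inequality $R|Z|^2 + \tfrac12\mathrm{Tor}(Z,Z) > 0$, but the chain you run through \cref{thm:m1} uses only the non-strict bounds $K(Z) \geq \tfrac12|H|^2$ and $|A(Z)| \leq |H|^2$, and these can only deliver $K(Z) - \tfrac12|A(Z)| \geq 0$; equality is actually attained on the model hypersurface \cref{e:ex}, which is exactly why \cref{thm:m1} is sharp, so nonnegativity cannot be upgraded for free. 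Your closing remark that ``the strictness of pseudoconvexity and the nonnegativity of $\tilde K$ enter only through \cref{thm:m1}'' shows the miss: the hypothesis you must invoke is the \emph{strict} pseudohermitian $\CC$-convexity in the statement, not strict pseudoconvexity.

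The repair is short and is what the paper does. By \cref{prop:26}, strict pseudohermitian $\CC$-convexity is equivalent to the strict inequality $|A(Z)| < |H|^2$ for all nonzero $Z$. Then, using $\tilde K \geq 0$ and \cref{e:mest} (or the identity \cref{e:36}),
\begin{align*}
K(Z) \;\geq\; \tfrac12|H|^2 \;>\; \tfrac12|A(Z)|,
\end{align*}
so $R + \tfrac12\mathrm{Tor}(Z,Z) \geq 2\bigl(K(Z) - \tfrac12|A(Z)|\bigr) > 0$ pointwise, which is genuine $\tfrac12$-positivity on the compact manifold $M$, and the appeal to Theorem~1.1 of \cite{cao2020} then goes through as you intended.
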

We point out that both the convexity of $(M^3,\theta)$ and the nonnegativity of the sectional curvature of the ambient space are necessary for Corollary~\ref{cor13a}. Indeed, consider the CR link $\Sigma(p,q,r)$ of a Brieskorn-Pham singularity $V(p,q,r): = \{z_1^p + z_2^q + z_3^r = 0\} \subset \CC^3$. Generally, $V(p,q,r)$ with the induced metric from a suitable flat metric on $\CC^3$ has nonpositive holomorphic sectional curvature. For an appropriate choice of contact form $\theta$, the link $\Sigma(p,q,r)$ is a pseudohermitian $\CC$-convex (in fact, of vanishing pseudohermitian torsion) real hypersurface in $V(p,q,r)$. On the other hand, it is well-known that many of these CR links admit no positive scalar curvature metrics \cite{gl,milnor}, see also \cite{ohta}. Thus, we obtain a wealth of examples which show that the nonnegativity of $\tilde{K}$ in Corollary~\ref{cor13a} is necessary. See Section~\ref{sec:brieskorn} for details.

In the rest of this introduction, we describe a concrete and familiar situation in which the pseudohermitian $\CC$-convexity in the sense of Definition~\ref{def1} coincides with the well-known and more intuitive notion of \textit{$\CC$-convexity} in complex analysis as defined in \cite{andersson2012complex}. The latter, in turns, is weaker than the ``usual'' convexity for subsets of the complex euclidean space. We thus obtain a family of manifolds for which the pseudohermitian $\CC$-convexity is evident. More precisely, let $M\subset \CC^{n+1}$ be a real hypersurface defined by $\vr = 0$ with $d\vr \ne 0$. Recall that if the restriction of the Hessian of $\vr$ onto the complex tangent space $H(M)$ is positive semi-definite, then $M$ is $\CC$-convex \cite{andersson2012complex}. 

In what follows, we shall assume the following condition holds along $M$:
\begin{align}\label{e:10}
	\frac{\partial^2 \vr}{\partial z_j \partial \zba_{k}} = a_{j\kba} + o(\vr)
\end{align}
where $a_{j\kba}$ is a positive definite Hermitian matrix with constant entries. Then $M$ is strictly pseudoconvex. Let $\theta = \iota^{\ast}(i\bar\partial \vr)$ be a pseudohermitian structure on $M$ and $\omega = ia_{j\kba} dz^j \wedge d\zba^k$ the K\"ahler form of a flat K\"ahler metric on $\CC^{n+1}$. Clearly,
\begin{align*}
	d\theta = \iota ^{\ast} \omega.
\end{align*}
Thus, $\iota$ is semi-isometric. In Proposition~\ref{prop:cconvex} we prove that under condition \eqref{e:10} $M$ is pseudohermitian $\CC$-convex if and only if $M$ is $\CC$-convex in the sense of \cite{andersson2012complex}. 
Moreover, let
\begin{align}
	|\partial\vr|^2 := \sum_{j,k=1}^{n+1} a^{j\kba} \vr_j \vr_{\kba}
\end{align}
be the squared-length of $\partial\vr$ in the (dual) metric $\omega$. Then $|H|^2 = 1/|\partial\vr|^2$; In fact, $|H|^2$ coincides with the \textit{Graham-Lee transverse curvature} of the defining function; see \cite{son2019semi}.
As a consequence of Theorem~\ref{thm:m1} and Proposition~\ref{prop:cconvex}, we have the following
\begin{corollary}\label{cor13}
	Let $(M,\theta)$ be given as above and assume that \eqref{e:10} holds. If $M$ is $\CC$-convex then $(M,\theta)$ has $\frac12$-positive holomorphic sectional curvature. More precisely, for any $(1,0)$-vector $Z \in T^{(1,0)}(M)$, $Z\ne 0$,
	\begin{align}
		K(Z) \geqslant \frac{1}{2}|\partial \vr|^{-2} \geqslant \frac{1}{2}|A(Z)|.
	\end{align}
	The inequalities are sharp.
\end{corollary}

Corollary~\ref{cor13} generalizes \cite[Theorem~1.2]{chanillo2012embedded} and \cite{li2011cr} about the positivity of the Tanaka-Webster scalar curvature on real ellipsoids. Recall that an ellipsoid $E$ in $\CC^2 \cong \RR^4$ is a compact real hypersurface given by the equation
\begin{align*}
	ax^2 + by^2 + cu^2 + d v^2 = 1.
\end{align*}
In complex coordinates $z = x + iy, w = u+iv$, we have
\begin{align*}
	E = \left\{(z,w) \in \CC^2 \colon \vr = \alpha |z|^2 + \beta |w|^2 + \Re (\gamma z^2 + \sigma w^2) -1 = 0\right\},
\end{align*}
where $\alpha = (a+b)/2, \beta = (c+d)/2, \gamma = (a-b)/2$, and $\sigma = (c-d)/2$. Put $\theta = \iota^{\ast}(i\bar{\partial}\vr)$, where $\iota \colon E \to \CC^2$ is the inclusion. Then $(E,\theta)$ is semi-isometrically immersed in $\CC^2$ with the metric $\alpha dz \otimes d\zba + \beta dw \otimes d\wba$. As $E$ is strictly convex, it is strictly $\CC$-convex in the sense of \cite{andersson2012complex} (hence pseudohermitian $\CC$-convex in the sense of Definition~\ref{def1} by Proposition~\ref{prop:2c}). Thus
\begin{align}
	R = 2K
	\geqslant 
	\frac{1}{|\partial\vr|^2} = \frac{\alpha\beta}{\beta|\vr_z|^2 + \alpha|\vr_w|^2}.
\end{align}
In particular, $R>0$, which was proved before in \cite{li2011cr} and \cite{chanillo2012embedded}.

Evidently, a real ellipsoid in general dimension is stricly convex and the same argument as above also shows that its holomorphic sectional curvature is bounded below by $\frac12 |\partial\vr|^{-2} > 0$.

A motivation for studying lower estimates of the scalar curvature is to bound the first positive eigenvalue $\lambda_1(\Box_b)$ of the Kohn-Laplacian $\Box_b : = \bar\partial^{\ast} \bar\partial + \bar\partial\bar\partial^{\ast}$ from below. Such a lower bound was first obtained in \cite{chanillo2012embeddability} under the nonnegativity of the CR Paneitz operator (see \cite{chanillo2012embeddability} and \cite{takeuchi} for more details on the CR Paneitz operator and \cite{lsw} for higher dimensional version of this estimate). Combining Corollary~\ref{cor13}, Chanillo, Chiu, and Yang's lower bound \cite{chanillo2012embeddability}, and the recent result of Takeuchi \cite{takeuchi}, we obtain the following 
\begin{corollary}\label{cor15}
	Let $M$ be a compact strictly pseudoconvex pseudohermitian real hypersurface $\CC^2$ defined by $\vr = 0$ and let $\theta = i\bar\partial\vr$. Assume that \eqref{e:10} holds. If $M$ is $\CC$-convex, then the first positive eigenvalue $\lambda_1(\Box_b)$ of the Kohn-Laplacian acting on functions satisfies
	\begin{align}\label{e:12}
		\lambda_1(\Box_b) \geqslant \frac{1}{2} \min_{M} \left(\frac{1}{ |\partial\vr|^2}\right).
	\end{align}
\end{corollary}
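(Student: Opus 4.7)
The plan is to chain three results together. First, I would invoke \cref{cor13}, whose hypotheses coincide precisely with those of the present corollary, to conclude that the Tanaka-Webster holomorphic sectional curvature of $(M,\theta)$ satisfies $K(Z) \geqslant \tfrac12|\partial\vr|^{-2}$ at every point of $M$ and in every direction. Since $M$ has real dimension three, the Tanaka-Webster scalar curvature is simply $R = 2K$, so this translates to the pointwise lower bound
\begin{align}
R \geqslant |\partial\vr|^{-2} \quad \text{on } M.
\end{align}

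Next, because $M$ is a compact strictly pseudoconvex hypersurface embedded in $\CC^2$, Takeuchi's theorem \cite{takeuchi} guarantees that the CR Paneitz operator $P_0$ is nonnegative. With both $R > 0$ and $P_0 \geqslant 0$ in hand, I would then apply the Chanillo-Chiu-Yang eigenvalue estimate from \cite{chanillo2012embeddability}, which under exactly these two hypotheses bounds $\lambda_1(\Box_b)$ from below by $\tfrac12 \min_M R$; combining this with the pointwise bound on $R$ from the previous step immediately yields \cref{e:12}.

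The main issue to watch for is that, depending on the precise form of the Chanillo-Chiu-Yang estimate one cites, a pseudohermitian torsion correction may appear on the right-hand side of the eigenvalue inequality. Should this occur, \cref{thm:m1} supplies the companion bound $|A(Z)| \leqslant |H|^2 = |\partial\vr|^{-2}$ under the $\CC$-convexity hypothesis, which is exactly of the right order of magnitude to absorb any such correction alongside the lower bound for $R$. This is the only step that requires a bit of care; otherwise, the proof is a direct assembly of \cref{cor13}, Takeuchi's nonnegativity theorem, and the Chanillo-Chiu-Yang eigenvalue bound, with no new analytic work beyond what is already established earlier in the paper.
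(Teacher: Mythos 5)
Your proposal is correct and is essentially the paper's own proof: invoke \cref{cor13} to get $R = 2K \geqslant |\partial\vr|^{-2} > 0$, then apply the eigenvalue bound \cref{ed}, which the paper already records as the combination of Takeuchi's nonnegativity of the CR Paneitz operator with the Chanillo--Chiu--Yang estimate. Your closing caveat about a possible torsion correction is unnecessary given the form of \cref{ed} used in the paper, but it does not affect the argument.
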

\begin{remark}
	When $\vr_{j\kba} = \delta_{jk}$, the Kroneker symbols, it was proved by Li, Lin, and the author in \cite{li2018} that 
	\begin{align}\label{e:18}
		\lambda_1(\Box_b) \leqslant \mathrm{average}_{M} \left(\frac{1}{ |\partial\vr|^2}\right)
	\end{align}
	where the averaging is taken with respect to the volume form $\theta \wedge d\theta$. In \eqref{e:12}, due to the factor $\frac12$ on the right-hand side, the estimate is not expected to be sharp. In view of \eqref{e:18}, it is interesting to know if one can improve \eqref{e:12} with the constant 1 on the right-hand side?
\end{remark}

\section{Backgrounds and basic results}
\subsection{Pseudohermitian structure on real hypersurfaces}
In this section, we introduce several basic notions and results of pseudohermitian geometry of nondegenerate real hypersurfaces in a complex manifold. For further details, we refer the readers to \cite{tanaka1975differential} and \cite{webster1978pseudo}.

Let $M\subset \mathcal{X}$ be a strictly pseudoconvex real hypersurface in a complex manifold. The CR structure on $M$ is induced from the complex structure on $\mathcal{X}$, namely,
\begin{align*}
	T^{(1,0)} (M) : = T^{(1,0)}(\mathcal{X}) \cap \CC T(M).
\end{align*}
The real hyperplane $H(M):=\Re \left(T^{(1,0)} (M)\right) \subset T(M)$ is the kernel of a nonvanishing real 1-form~$\theta$. By strict pseudoconvexity, $d\theta$ is positively or negatively definite when restricted to $H(M)$. For simplicity, we assume that $d\theta$ is positive definite on $H(M)$. The triple $(M,T^{(1,0)} (M), \theta)$ is called a pseudohermitian manifold \cite{webster1978pseudo}.

Fix a pseudohermitian structure $\theta$, there exists a unique real vector field $T$ transverse to $H$, the Reeb vector field, such that
\begin{align*}
	T \rfloor d\theta =0,\quad 
	\theta(T) = 1.
\end{align*}
Let $Z_{\alpha}$, $\alpha =1,2,\dots, n$, be a local frame for $T^{(1,0)}(M)$ and let $Z_{\aba}=\overline{Z_{\alpha}}$ be the conjugates. Then $\{Z_{\alpha},Z_{\aba},T \}$ is a local frame for $\CC T(M)$. Its dual frame $\{\theta^{\alpha},\theta^{\aba},\theta\}$ is called an admissible coframe if
\begin{align*}
	d\theta = i h_{\alpha\bba} \theta^{\alpha} \wedge \theta^{\bba},
\end{align*}
where $h_{\alpha\bba}$ is a positive definite Hermitian matrix. We use $h_{\alpha\bba}$ and its inverse transpose $h^{\alpha\bba}$ to lower and raise the Greek indices.

On a pseudohermitian manifold, there is a canonical linear connection---the Tanaka-Webster connection \cite{tanaka1975differential,webster1978pseudo}---which can be defined by
\begin{align*}
	\nabla Z_{\alpha} = \omega_{\alpha}{}^{\beta} Z_{\beta},
	\quad
	\nabla Z_{\aba} = \omega_{\aba}{}^{\bba} Z_{\bba},
	\quad
	\nabla T = 0, 
\end{align*}
where the connection forms $\omega_{\alpha}{}^{\beta}$'s satisfy the following structural equations
\begin{align*}
	d\theta ^{\beta} & = \theta^{\alpha} \wedge \omega_{\alpha}{}^{\beta} + A^{\beta}{}_{\aba} \theta \wedge \theta^{\aba}, \\
	dh_{\alpha\bba} & = \omega_{\alpha}{}^{\gamma} h_{\gamma \bba} + h_{\alpha\gba} \omega _{\bba}{}^{\gba}.
\end{align*}
The tensor $A_{\alpha\beta}$ is the pseudohermitian torsion, which is shown to be symmetric: $A_{\alpha\beta} = A_{\beta\alpha}$ \cite{webster1978pseudo}.
The curvature form
$\Omega{_{\alpha}^{\beta}} := d\omega_{\alpha}{}^{\beta}
- \omega_{\alpha}{}^{\gamma} \wedge \omega_{\gamma}{}^{\beta}$
of the Tanaka-Webster connection satisfies
\begin{align*}
	\Omega_{\alpha}{}^{\beta}
	&= R_{\alpha}^{\beta}{}_{\gamma\sba } \theta{^{\gamma}} \wedge \theta{^{\sba}}
	- \nabla^{\beta} A_{\alpha\gamma} \theta \wedge \theta^\gamma
	+ \nabla_{\alpha} A^{\beta}{}_{\gba} \theta \wedge \theta^{\gba} \notag \\
	&\qquad - iA_{\alpha\gamma} \theta^{\gamma} \wedge \theta^{\beta}
	+ i h_{\alpha\gba} A^{\beta}{}_{\bar{\vr }}
	\theta^{\gba} \wedge \theta^{\bar{\vr }}.
\end{align*}
The tensor $R_{\alpha}{}^{\beta}{}_{\gamma\sba}$ is called the \emph{Tanaka-Webster curvature}. It satisfies the following symmetries:
\begin{equation*}
	R_{\alpha\bba\gamma\sba}
	=R_{\gamma\bba\alpha\sba}
	=R_{\alpha\sba\gamma\bba}.
\end{equation*}
As usual, contraction of indices gives the Ricci tensor $R_{\alpha\bba} = h^{\gamma\sba}R_{\alpha\bba\gamma\sba}$ and the scalar curvature $R = h^{\alpha\bba} R_{\alpha\bba}$. The Tanaka-Webster holomorphic sectional curvature is defined similarly to the Hermitian or K\"ahler case by equation \eqref{e:secdef} in the introduction. For CR 3-manifolds, these 3 curvature quantities are essentially the same.

\edit{The fundamental Chern-Moser invariant of Levi-nondegenerate CR manifolds can be ``represented'' by the completely tracefree part of the Tanaka-Webster curvature, the \textit{Chern-Moser-Weyl tensor} $S_{\beta}{}^{\alpha}{}_{\gamma\sba}$ \cite{webster1978pseudo}. Precisely,
	\begin{align*}
		S_{\beta}{}^{\alpha}{}_{\gamma\bar{\sigma}}
		&=
		R_{\beta}{}^{\alpha}{}_{\gamma\bar{\sigma}}
		- \frac{R_{\beta}{}^{\alpha} h_{\gamma\bar{\sigma}} +R_{\gamma}{}^{\alpha} h_{\beta\bar{\sigma}} + \delta_{\beta}^{\alpha}R_{\gamma\bar{\sigma}} + \delta_{\gamma}^{\alpha} R_{\beta\bar{\sigma}}}{n+2}
		+ \frac{R(\delta_{\beta}^{\alpha}h_{\gamma\bar{\sigma}}+\delta_{\gamma}^{\alpha}h_{\beta\bar{\sigma}})}{(n+1)(n+2)},
	\end{align*}
	where $\dim M = 2n+1\geqslant 5$.}

Let $f$ be a smooth function on a compact pseudohermitian manifold $M$. We can write
\begin{equation*}
	df = f_{\alpha}\theta^{\alpha} + f_{\bba}\theta^{\bba} + f_0 \theta.
\end{equation*}
The \textit{Kohn-Laplacian} acting on funtions $\Box_b \colon C^{\infty}(M,\CC) \to C^{\infty}(M,\CC)$, $\Box_b f = \bar\partial_b^{\ast} \bar\partial_b f$, can be expressed locally by
\begin{equation*}
	\Box_b f = - f_{\aba,}{}^{\aba},
\end{equation*}
where an index preceded by a coma indicates a Tanaka-Webster covariant derivative. For compact embeddable strictly pseudoconvex CR manifolds, the spectral theory of $\Box_b$ is well-understood. Combining Chanillo, Chiu, and Yang \cite{chanillo2012embeddability} and Takeuchi \cite{takeuchi}, we find that the first positive eigenvalue $\lambda_1(\Box_b)$ on embeddable CR 3-manifold satisfies
\begin{equation}\label{ed}
	\lambda_1(\Box_b) \geqslant \frac{1}{2}\min_M R.
\end{equation}
This is meaningful when $(M,\theta)$ has positive Tanaka-Webster scalar curvature. For the case $n := \dim _{C\!R} M \geqslant 2$, if the Tanaka-Webster Ricci curvature is bounded below by $\kappa > 0$, then 
\begin{equation} 
	\lambda_1(\Box_b) \geqslant \frac{n}{n+1} \kappa.
\end{equation} 
See \cite{lsw}.
\subsection{Semi-isometric CR immersions and the Gauss equations}
Let $M \subset \CC^{n+1}$ be a strictly pseudoconvex CR manifold and let $\vr$ be a defining function for $M$, i.e., $M = \{\vr = 0\}$ and $d\vr \ne 0 $ along $M$. If in addition $\vr$ is strictly plurisubharmonic, then it induces a K\"ahler metric $\omega = i\partial\bar\partial\vr$ on a neighborhood $U$ of $M$. The pseudohermitian structure $\theta: = \iota ^{\ast}(i\bar\partial\vr)$ and the K\"ahler form $\omega$ satisfy $d\theta = \iota^{\ast} \omega$ and thus $\iota \colon M \to U$ is a semi-isometric immersion in the terminology of \cite{son2019semi}. In fact, we have the following
\begin{definition}[\cite{son2019semi}] Let $(M,\theta)$ be a strictly pseudoconvex pseudohermitian manifold, $(\mathcal{X},\omega)$ a K\"ahler manifold, and $F\colon M\to (\mathcal{X},\omega)$ a smooth CR mapping. We say that $F$ is \textit{semi-isometric} if
	\begin{equation}\label{e:semi-isometric}
		d\theta = F^{\ast} \omega.
	\end{equation}
\end{definition}
If $M\subset \mathcal{X}$ and if the inclusion $\iota \colon (M,\theta) \to (\mathcal{X},\omega)$ is semi-isometric, then we say that $(M,\theta)$ is a pseudohermitian submanifold of $(\mathcal{X},\omega)$. In this situation, the pseudohermitian geometry of $(M,\theta)$ and the K\"ahler geometry of the ambient manifold $(\mathcal{X},\omega)$ are related via the Gauss equations \cite{son2019semi}. We describe this in more detail as follows. Let $\nabla$ and $\widetilde{\nabla}$ be the Tanaka--Webster connection of $(M , \theta)$ and the Chern connection of $(\mathcal{X},\omega)$, respectively. Then the second fundamental form of $M$ is defined by the Gauss formula (cf. \cite{son2019semi})
\begin{equation*} 
	\II(Z,W) := \widetilde{\nabla}_{\widetilde{Z}}\widetilde{W} - \nabla_ZW.
\end{equation*} 
Here $\widetilde{Z}$ and $\widetilde{W}$ are smooth extensions of $Z$ and $W$ to a neighborhood of a point of $M$ in $\mathcal{X}$.
Taking the trace of $\II$ on {horizontal directions}, we obtain the $(1,0)$-mean curvature vector field~
$H$. Namely,
\begin{equation}
	H: = \frac1n\sum_{\alpha =1}^{n} \II(Z_{\aba} , Z_{\alpha}).
\end{equation}

Basic properties of $\II$ and $H$ were given in \cite{son2019semi}. For example, $\II$ is nonsymmetric, but for $ Z,W \in T^{(1,0)}(M)$, it holds that
\begin{align}
	\II(Z,W) = \II(W,Z),
\end{align}
while for $Z \in T^{(1,0)}(M)$ and $\Wba \in T^{(0,1)}(M)$,
\begin{align}
	\II(Z, \Wba) - \II(\Wba,Z)
	=
	- i \langle Z, \Wba \rangle T,
\end{align}
where $T$ is the Reeb vector field. Moreover,
\begin{align}
	\II(Z,\Wba) 
	=
	\langle Z ,\Wba \rangle \overline{H}.
\end{align}
We say that $\iota$ is \textit{totally umbilical} if $\II(Z,W) =0 $ for all $(1,0)$-vectors $Z$ and $W$.

We shall use the following conventions for the curvature operator and torsion of $\nabla$:
\begin{align*}
	R(X,Y)Z & = \nabla_X\nabla_YZ - \nabla_Y \nabla_X Z - \nabla_{[X,Y]}Z,\\
	\mathbb{T}_{\nabla} (X,Y) & = \nabla_XY - \nabla_{Y}X - [X,Y].
\end{align*}
Denote by $\tau$ the pseudohermitian torsion, i.e.,
\begin{equation}
	\tau X := \mathbb{T}_{\nabla} (T,X).
\end{equation}
The Levi metric on $M$ and the K\"ahler metric on $\mathcal{X}$ will be denoted by $\langle \cdot , \cdot \rangle$. The curvature of the Chern connection of $\omega$ will be denoted by $\widetilde{R}$. The Gauss equations are as follows:
\begin{proposition}[Gauss equations \cite{son2019semi}]\label{prop:ge}
	Let $\iota \colon (M, \theta) \hookrightarrow (\mathcal{X},\omega)$ be a pseudohermitian CR submanifold of a Kähler manifold. Let $R$ and $\widetilde{R}$ be the curvature operators of the Tanaka--Webster and Chern connection on $M$ and $\mathcal{X}$, respectively. Then
	\begin{enumerate}
		\item for $X,Z \in \Gamma(T^{(1,0)}(M))$ and $\overline{Y},\overline{W} \in \Gamma(T^{0,1} (M)) $, the following Gauss equation holds:
		\begin{align}\label{e:gauss}
			\langle \widetilde{R}(X,\overline{Y}) Z, \Wba\rangle
			& =
			\langle R(X,\overline{Y}) Z, \Wba\rangle
			+
			\langle \II (X,Z) , \II (\overline{Y}, \Wba) \rangle \notag \\
			& \qquad - |H |^2 \left(\langle \overline{Y} , Z \rangle \langle X ,\Wba \rangle + \langle X , \overline{Y} \rangle \langle Z , \Wba \rangle \right),
		\end{align}
		\item for $X,Z \in \Gamma(T^{(1,0)}(M))$,
		\begin{equation}\label{e:gausstorsion}
			\langle \tau X , Z \rangle 
			=
			-i \langle \II(X,Z) , \overline{H} \rangle.
		\end{equation}
	\end{enumerate}
\end{proposition}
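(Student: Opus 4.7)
My plan is to extend $X, Z \in \Gamma(T^{(1,0)}M)$ and $\overline{Y}, \overline{W} \in \Gamma(T^{(0,1)}M)$ to a neighbourhood of $M$ in $\mathcal{X}$ and expand $\widetilde R(X, \overline{Y}) Z = \widetilde{\nabla}_X\widetilde{\nabla}_{\overline{Y}} Z - \widetilde{\nabla}_{\overline{Y}}\widetilde{\nabla}_X Z - \widetilde{\nabla}_{[X, \overline{Y}]} Z$ using the Gau\ss{} formula $\widetilde{\nabla}_U V = \nabla_U V + \II(U, V)$, which is valid whenever both $U$ and $V$ are tangent to $M$. Each substitution produces a Tanaka-Webster piece together with a ``second fundamental form'' correction such as $\widetilde{\nabla}_X \II(\overline{Y}, Z)$, which after pairing with $\overline{W}$ I simplify using the K\"ahler metric-compatibility of $\widetilde{\nabla}$.

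\textbf{Organisation of the terms for (i).} From the identity
\begin{align*}
\langle \widetilde{\nabla}_X \II(\overline{Y}, Z), \overline{W}\rangle = X\langle \II(\overline{Y}, Z), \overline{W}\rangle - \langle \II(\overline{Y}, Z), \widetilde{\nabla}_X \overline{W}\rangle,
\end{align*}
combined with $\widetilde{\nabla}_X \overline{W} = \nabla_X \overline{W} + \II(X, \overline{W})$, the four correction terms organise into the Tanaka-Webster curvature $\langle R(X, \overline{Y}) Z, \overline{W}\rangle$ (from the $\nabla\nabla$-parts and the horizontal component of $[X, \overline{Y}]$) plus a bilinear pairing $\langle \II(X, Z), \II(\overline{Y}, \overline{W})\rangle$ (from the normal pieces). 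For the bracket term $\widetilde{\nabla}_{[X, \overline{Y}]} Z$ I decompose $[X, \overline{Y}]$ into horizontal and Reeb components; since $T \rfloor d\theta = 0$ and $d\theta = \iota^*\omega$, the Reeb component equals $\theta([X, \overline{Y}])\,T = -d\theta(X,\overline{Y})\,T = -i\langle X, \overline{Y}\rangle T$. This is the only source of Reeb-direction contributions, and plugging the identity $\II(U, \overline{V}) = \langle U, \overline{V}\rangle \overline{H}$ into the mixed $\II$-pairings that it forces produces exactly the correction $-|H|^2\bigl(\langle\overline{Y}, Z\rangle\langle X, \overline{W}\rangle + \langle X,\overline{Y}\rangle\langle Z,\overline{W}\rangle\bigr)$ in \cref{e:gauss}.

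\textbf{The torsion identity (ii).} Since the Chern connection of a K\"ahler metric is torsion-free, $[X, T] = \widetilde{\nabla}_X T - \widetilde{\nabla}_T X$, while $\nabla T = 0$ gives $\tau X = \mathbb{T}_\nabla(T, X) = \nabla_T X - [T, X]$. Subtracting and applying the Gau\ss{} formula yields
\begin{align*}
\tau X = \II(X, T) - \II(T, X).
\end{align*}
Pairing with $Z \in T^{(1,0)}M$ and using the type decomposition $T = \xi + \bar{\xi}$ in $T\mathcal{X}|_M$ with $\xi \in T^{1,0}\mathcal{X}$ (whose normal component is proportional to $H$), together with the identity $\II(U, \overline{V}) = \langle U, \overline{V}\rangle \overline{H}$ and metric-compatibility of $\widetilde{\nabla}$, delivers the desired $\langle \tau X, Z\rangle = -i\langle \II(X, Z), \overline{H}\rangle$.

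\textbf{Main obstacle.} The delicate points are the appearance of the $|H|^2$ correction in \cref{e:gauss} and the factor $i$ in \cref{e:gausstorsion}: both reflect that the Reeb vector $T$ has a nontrivial normal projection into $T^{1,0}\mathcal{X}$, a feature that has no analogue in classical Riemannian submanifold theory. The cleanest way to keep the bookkeeping under control is to work in a local unitary frame adapted to the orthogonal splitting $T\mathcal{X}|_M = T^{(1,0)}M \oplus N^{1,0}M$ and its conjugate, and to match the Chern and Tanaka-Webster connection forms index by index; this reduces the whole argument to verifying the coefficients on a handful of explicit terms.
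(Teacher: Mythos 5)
The paper states \cref{prop:ge} without proof, quoting it from \cite{son2019semi}, so there is no internal argument to measure yours against; your plan is the natural Gau\ss-formula computation, and as a strategy it does produce \cref{e:gauss} and \cref{e:gausstorsion}. One piece of your bookkeeping for (i) is misattributed, though: you claim the Reeb component $\theta([X,\overline{Y}])T=-i\langle X,\overline{Y}\rangle T$ of the bracket forces \emph{both} $|H|^2$ corrections. It only accounts for $-|H|^2\langle X,\overline{Y}\rangle\langle Z,\Wba\rangle$, via $\langle\II(T,Z),\Wba\rangle=i|H|^2\langle Z,\Wba\rangle$ (a computation that already uses $iT=H-\overline{H}$). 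The other correction, $-|H|^2\langle\overline{Y},Z\rangle\langle X,\Wba\rangle$, cannot come from the bracket (its coefficient is not $\langle X,\overline{Y}\rangle$); it comes from the very term you display at the start: since $\II(\overline{Y},Z)=\langle Z,\overline{Y}\rangle H$ and $\widetilde{\nabla}_X\Wba=\nabla_X\Wba+\langle X,\Wba\rangle\overline{H}$, metric compatibility gives $\langle\widetilde{\nabla}_X\II(\overline{Y},Z),\Wba\rangle=-|H|^2\langle Z,\overline{Y}\rangle\langle X,\Wba\rangle$. So the ingredients you set up do suffice, but the stated accounting of where the two terms arise would not close as written and must be corrected before the proof is spelled out.

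Be explicit as well that the argument is not self-contained: it leans on three structural facts quoted from \cite{son2019semi} and not proved in this paper either, namely that $\II(X,Z)\in N^{1,0}M$ for $X,Z\in T^{(1,0)}(M)$ (needed so that pairings such as $\langle\II(X,\nabla_{\overline{Y}}Z),\Wba\rangle$ vanish and only $\langle\II(X,Z),\II(\overline{Y},\Wba)\rangle$ survives), that $\II(Z,\Wba)=\langle Z,\Wba\rangle\overline{H}$, and that $iT=H-\overline{H}$. For (ii), ``normal component proportional to $H$'' understates what you need: the $(1,0)$-part of $T$ must be exactly $-iH$, with no tangential piece, otherwise $\langle\widetilde{\nabla}_X\bar{\xi},Z\rangle$ produces extra terms. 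Granting that, your identity $\tau X=\II(X,T)-\II(T,X)$ is correct, $\langle\II(T,X),Z\rangle=0$ because both entries of that (bilinear) pairing are of type $(1,0)$, and $\langle\II(X,T),Z\rangle=i\langle\widetilde{\nabla}_X\overline{H},Z\rangle=-i\langle\II(X,Z),\overline{H}\rangle$ yields \cref{e:gausstorsion}.
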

The Gauss equation \eqref{e:gauss} is similar to several versions in the literature (cf. \cite{ehz}) and has found several applications \cite{son2019semi,reiter2019chern}. For example, Reiter and the author \cite{reiter2019chern} used this equation to establish an explicit and concise formula for the well-known Chern-Moser-Weyl tensor. 
On the other hand, equation \eqref{e:gausstorsion}, which has no Riemannian counterpart, relates the (intrinsic) torsion of the submanifold and the (extrinsic) second fundamental form of the immersion. This equation will also be of importance for us.

As a consequence of the Gauss equations we have the following
\begin{proposition}\label{prop:22} Let $\iota \colon (M, \theta) \hookrightarrow (\mathcal{X},\omega)$ be a real hypersurface of a Kähler manifold with $d\theta = \iota^{\ast} \omega$ and let $H$ be the $(1,0)$-mean curvature vector of $M$. Let $K$ and $\tilde{K}$ be the holomorphic sectional curvatures of $\theta$ and $\omega$, respectively, and let $A$ be the holomorphic sectional torsion of~$\theta$. Then
	\begin{align}\label{e:36}
		K(Z) + \frac{1}{2} |A(Z)|^2/ |H|^2 = \frac{1}{2
		}\tilde{K}(Z) + |H|^2 \quad \forall \, Z \in T^{(1,0)}(M).
	\end{align}
	Moreover, $(M,\theta)$ has vanishing pseudohermitian torsion if and only if $\iota \colon M \to \mathcal{X}$ is totally umbilical.
\end{proposition}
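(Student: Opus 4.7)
The proof will be essentially a two-step computation from the Gauss equations in \cref{prop:ge}. First, specialize the curvature Gauss equation \cref{e:gauss} to the ``diagonal'' $X = Z$, $\bar Y = \bar W = \bar Z$ for an arbitrary nonzero $Z \in T^{(1,0)}(M)$. Using $\overline{\II(Z,Z)} = \II(\bar Z,\bar Z)$, the second-fundamental-form term collapses to $|\II(Z,Z)|^2$, and since $\langle Z,\bar Z\rangle = |Z|^2$, the mean-curvature term collapses to $-2|H|^2|Z|^4$. Dividing by $|Z|^4$ and translating through the respective normalizations (Webster's factor $\frac{1}{2}$ in \cref{e:secdef} for $K$; no such factor for the K\"ahler $\tilde K$), the identity rearranges into
\[
K(Z) + \frac{1}{2}\frac{|\II(Z,Z)|^2}{|Z|^4} = \frac{1}{2}\tilde K(Z) + |H|^2.
\]
So the whole content of \cref{e:36} reduces to the identity $|\II(Z,Z)|^2/|Z|^4 = |A(Z)|^2/|H|^2$.

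That identity is where the real-hypersurface hypothesis enters. Since the $(1,0)$-normal bundle $N^{1,0}M$ has complex rank one and is spanned by $H$ whenever $H\ne 0$, one can write $\II(Z,Z) = c(Z)\,H$ for some scalar $c(Z)$, and hence $|\II(Z,Z)|^2 = |c(Z)|^2|H|^2$. Pairing with $\bar H$ and applying the torsion Gauss equation \cref{e:gausstorsion} with $X=Z$ gives $\langle \tau Z, Z\rangle = -i\,c(Z)\,|H|^2$, while a direct computation in a local frame $\{Z_\alpha\}$ identifies $\langle \tau Z, Z\rangle$ with $A_{\alpha\beta}\zeta^\alpha\zeta^\beta$ up to a conjugation/sign convention. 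In either case $|\langle \tau Z,Z\rangle|^2 = |A_{\alpha\beta}\zeta^\alpha\zeta^\beta|^2 = |A(Z)|^2|Z|^4$, and combining with the previous line yields $|c(Z)|^2 = |A(Z)|^2|Z|^4/|H|^4$, i.e. $|\II(Z,Z)|^2/|Z|^4 = |A(Z)|^2/|H|^2$. Substituting back closes the proof of \cref{e:36}.

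For the ``moreover'' clause, recall that $\II$ is symmetric on $T^{(1,0)}\times T^{(1,0)}$, so by polarization total umbilicity of $\iota$ is equivalent to $\II(Z,Z) = 0$ for every $(1,0)$-vector $Z$; with $H\ne 0$ this is equivalent to $c(Z)\equiv 0$, which by the argument above is equivalent to $A_{\alpha\beta}\zeta^\alpha\zeta^\beta\equiv 0$ for every $(\zeta^\alpha)$, and hence (by symmetry of $A_{\alpha\beta}$ and polarization) to $A_{\alpha\beta}\equiv 0$. Plugging $A\equiv 0$ into the main identity immediately gives $K(Z) = \frac{1}{2}\tilde K(Z) + |H|^2$. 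The only genuinely delicate step is bookkeeping the sign and complex-conjugation conventions that relate $\langle \tau Z, Z\rangle$ to $A_{\alpha\beta}\zeta^\alpha\zeta^\beta$, together with the Webster factor $\frac12$; once these are fixed the proposition falls out of two applications of the Gauss equations.
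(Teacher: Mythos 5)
Your argument is correct and follows essentially the same route as the paper: specialize the Gauss equation \cref{e:gauss} to $X=Z$, $\overline{Y}=\overline{W}=\overline{Z}$, then use the rank-one normal bundle together with the torsion Gauss equation \cref{e:gausstorsion} to identify $|\II(Z,Z)|^2$ with $|A(Z)|^2/|H|^2$, and conclude the umbilicity equivalence by polarization. The only cosmetic difference is that the paper works with $|Z|=1$ and the scalar form $h(Z,Z)$ instead of your coefficient $c(Z)$, and the conjugation/sign convention you leave open is harmless since only moduli enter.
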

\begin{proof}
	Let $Z \in T^{(1,0)}(M)$ with $|Z| = 1$. From Gauss equation \eqref{e:gauss} we get
	\begin{align}\label{e:38}
		\frac{1}{2}\tilde{K}(Z)
		=
		K(Z)
		+
		\frac{1}{2}|\II(Z,Z)|^2 - |H|^2.
	\end{align}
	On the other hand, from \eqref{e:gausstorsion} and \eqref{e5}, we also have that
	\begin{align*}
		i A_{\alpha\beta} \zeta^{\alpha}\zeta^{\beta}
		=
		i\langle \tau Z, Z \rangle 
		=
		\langle \II(Z,Z) , \overline{H} \rangle 
		=
		\sqrt{2}\, h(Z,Z) |H|.
	\end{align*}
	But $\II(Z,Z)$ belongs to $N^{(1,0)}(M)$ which has 1-dimensional fibers, we have
	\begin{align*}
		\II(Z,Z) = \sqrt{2}\, h(Z,Z) H/|H|,
	\end{align*}
	and therefore, since $|Z| = 1$, we have
	\begin{align*}
		|\II(Z,Z)|^2
		=
		2\,|h(Z,Z)|^2
		=
		|A(Z)|^2\bigl/ |H|^2.
	\end{align*}
	Plugging this into \eqref{e:38} we obtain \eqref{e:36}. Moreover, $\theta$ has vanishing torsion iff $A(Z) = 0$ iff $\II(Z,Z) = 0$ for all $Z \in T^{(1,0)}(M)$. But this is also equivalent to $\II(Z,W) = 0$ for all $Z,W\in T^{(1,0)}(M)$ since $\II$ is symmetric and bilinear when restricted to $T^{(1,0)}(M)$, i.e., $M$ is totally umbilical. The proof is complete.
\end{proof}

\subsection{Pseudohermitian $\CC$-convexity}\label{subsec:convex}
A domain $\Omega \subset \mathbb{CP}^{n+1}$ is $\CC$-convex if any non-empty intersection of $\Omega$ with a complex line is connected and simply connected. This notion of convexity plays an important role in several problems in complex analysis related to
the geometry of (weakly or strongly) pseudoconvex domains in complex euclidean or projective spaces; see \cite{andersson2012complex} for some motivations and more details. For bounded domains with $C^2$-smooth boundaries in $\CC^{n}\subset \mathbb{CP}^n$, there is also a differential condition for the $\CC$-convexity which involves the Hessian of a defining function. This condition is described as follows. If $\vr$ is a real-valued $C^2$-smooth function, then the Hessian of $\vr$ is the following quadratic form
\begin{align}
	\mathrm{Hess}_{\vr}(a; \eta) := \sum_{j,k}\Re \left( \vr_{jk}(a) \eta_j \eta_k\right) + \sum_{j,k} \vr_{j\kba}(a) \eta_j \eta_{\kba}, 
	\quad \eta \in \CC^n.
\end{align}
The boundary characterization of the $\CC$-convexity is as follows.
\begin{proposition}[see Theorem~2.5.18 of \cite{andersson2012complex}]\label{prop:cconvex}
	Suppose that $\Omega$ is $C^2$-smoothly bounded domain with a real-valued $C^2$-smooth defining function $\vr$, i.e., $\Omega = \{\vr < 0\}$ with $d\vr$ does not vanish on the boundary of $\Omega$. Then the following are equivalent.
	\begin{enumerate}
		\item The domain $\Omega$ is $\CC$-convex.
		\item The restriction of the Hessian of $\vr$ at any boundary point $p\in \partial\Omega $ to the complex tangent plane through $p$ is positive semi-definite.
	\end{enumerate}
\end{proposition}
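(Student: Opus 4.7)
The statement is precisely Theorem~2.5.18 of \cite{andersson2012complex}, so my plan is to invoke that reference directly; for orientation I sketch both directions.

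For (i) $\Rightarrow$ (ii) I argue by contrapositive. Suppose at some $p \in \partial\Omega$ there is a vector $v \in T_p^{\CC}\partial\Omega$ with $\mathrm{Hess}_\vr(p; v) < 0$. Since the complex tangent plane is $J$-invariant, $iv$ also lies in it, so $\mathrm{span}_\RR\{v, iv\}$ is a genuine complex line through $p$. Parameterise $L := \{p + \lambda v : \lambda \in \CC\}$ and Taylor expand $\vr$ in powers of $\lambda \in \CC$. The assumption $v \in T_p^{\CC}\partial\Omega$ forces $\partial\vr(p)\cdot v = 0$ and kills the first-order contribution, so
\begin{align*}
\vr(p + \lambda v) = Q(\mathrm{Re}\,\lambda,\, \mathrm{Im}\,\lambda) + O(|\lambda|^3),
\end{align*}
where $Q$ is a real quadratic form in two variables with $Q(s, 0) = s^2\, \mathrm{Hess}_\vr(p; v) < 0$. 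Hence $L$ dips into $\Omega$ along a curve through $p$; translating $L$ slightly in the outward normal direction then yields a nearby complex line whose intersection with $\Omega$ is either disconnected or non-simply-connected, contradicting $\CC$-convexity.

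For (ii) $\Rightarrow$ (i) the plan is first to recast positive semi-definiteness of the Hessian on each complex tangent plane as \emph{weak lineal convexity}: each complex tangent affine hyperplane at a boundary point $p$ stays locally outside $\overline{\Omega}$ up to second order. The deeper and genuinely global step is to upgrade weak lineal convexity to $\CC$-convexity for smoothly bounded domains; this is a classical theorem of H\"ormander, developed as Theorems~2.5.2 and~2.5.7 of \cite{andersson2012complex}.

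The hard part is clearly the implication (ii) $\Rightarrow$ (i), since it converts a pointwise second-order condition at the boundary into a global topological statement (connectedness and simple connectedness) about complex line sections of $\Omega$. For this reason my preferred route is to defer the proof to the detailed treatment in \cite{andersson2012complex} rather than reconstruct the argument here.
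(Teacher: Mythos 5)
Your proposal matches the paper exactly: the paper offers no proof of this proposition beyond attributing it to Theorem~2.5.18 of \cite{andersson2012complex}, which is precisely your plan. Your orientation sketch of the two implications (the contrapositive perturbation argument for (i)~$\Rightarrow$~(ii), and passing through weak lineal convexity via H\"ormander's theorem for (ii)~$\Rightarrow$~(i)) is consistent with the treatment in that reference, so deferring the details there is appropriate.
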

The condition that $\mathrm{Hess}_{\vr}$ is positive (semi-)definite on the complex tangent plane is offen called the ``Behnke-Peschl'' condition which implies the weak pseudoconvexity of the domain. Clearly, the $\CC$-convexity is \textit{not} invariant under general biholomorphic mappings of $\CC^n$ or $\mathbb{CP}^n$. Thus it cannot be generalized for real hypersurfaces in an arbitrary complex manifold without additional structures (e.g. a K\"ahler metric). For subsets of complex projective space, this convexity gives a rather strict constrain. For example, a $\CC$-convex domain in $\CC^{N}$ must be diffeomorphic to the ball (see \cite{andersson2012complex}).

The notion of pseudohermitian $\CC$-convexity in Definition~\ref{def1} is analogous to the notion of $\CC$-convexity in Proposition~\ref{prop:cconvex}, but the former depends essentially on the pseudohermtian structure on $M$ and the K\"ahler metric on the ambient space. We prove below that for a class of pseudohermitian real hypersurfaces in $\CC^{n+1}$ these two notions of convexity coincide.
\begin{proposition}\label{prop:2c}
	Let $M\subset \mathbb{C}^{n+1}$ be a strictly pseudoconvex real hypersurface defined by $\vr = 0$ with $d\vr \ne 0$. Suppose that $\vr_{j\kba} = a_{j\kba} + o(\vr)$ along $M$, where $[a_{j\kba}]$ is a hermitian matrix of constants. Let $\theta = \iota^{\ast}(i\bar{\partial}\vr)$. Then $(M,\theta)$ is a semi-isometric real hypersurface in $\mathbb{C}^{n+1}$ equipped with the hermitian metric $a_{j\kba} dz^j \otimes dz^{\kba}$. Moreover, the following are equivalent.
	\begin{enumerate}
		\item $M$ is $\CC$-convex.
		\item $(M,\theta)$ is pseudohermitian $\CC$-convex in $\left(\mathbb{C}^{n+1}, a_{j\kba} dz^j \otimes dz^{\kba}\right)$ in the sense of Definition~\ref{def1}.
	\end{enumerate}
\end{proposition}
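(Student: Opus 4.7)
The plan is to reduce both conditions to the positive semi-definiteness of the real Hessian of $\vr$ restricted to the complex tangent plane, by computing the scalar second fundamental form $h$ of the immersion directly from $\vr$.

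First I would verify that $\iota \colon M \to (\CC^{n+1}, \omega)$ is semi-isometric, where $\omega = ia_{j\kba}dz^j\wedge d\zba^k$. Since $\theta = \iota^{\ast}(i\bar\partial \vr)$, we have $d\theta = \iota^{\ast}(i\partial\bar\partial \vr) = \iota^{\ast}(i\vr_{j\kba}dz^j\wedge d\zba^k)$, and the hypothesis $\vr_{j\kba} = a_{j\kba} + o(\vr)$ forces $\vr_{j\kba}|_M = a_{j\kba}$, giving $d\theta = \iota^{\ast}\omega$. This establishes the first assertion of the proposition and places us in the Gau\ss{}-equation framework of Section 2.

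Next I compute $\II$ explicitly. Since $\omega$ is flat, the Chern connection is just coordinate differentiation, $\widetilde{\nabla}_X Y = \sum_j X(Y^j)\partial_j$. The unit $(1,0)$-normal to $M$ is $N = \xi/|\partial\vr|$ with $\xi = \sum a^{j\kba}\vr_{\kba}\partial_j$. For $Z \in T^{(1,0)}M$ the tangency relation $Z\vr = 0$, differentiated along $Z$, yields $\sum Z(Z^j)\vr_j = -\sum \vr_{jk}Z^jZ^k$, and pairing with $N$ gives
\[
\II(Z,Z) = -|\partial\vr|^{-1}\Bigl(\sum_{j,k}\vr_{jk}Z^jZ^k\Bigr)\, N.
\]
Differentiating instead along $\bar Z$ and using $\vr_{j\kba}|_M = a_{j\kba}$ supplies the normal components of $\II(Z,\bar Z)$ and $\II(\bar Z,Z)$, which reduce to multiples of $|Z|^2$. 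Taking the unitary trace then produces $H = -|\partial\vr|^{-1} N$, consistent with $|H|^2 = |\partial\vr|^{-2}$ as stated in the paper.

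With these formulas in hand, I expand $h(v,v) = \langle \II(v,v),\mu\rangle / |\mu|$ for a real vector $v = Z + \bar Z \in H(M)$ with $Z \in T^{(1,0)}M$, using the bilinear extension of $\II$ and of the metric. Although $\II(Z,\bar Z)$ and $\II(\bar Z,Z)$ are not purely normal in general, their tangential components pair trivially with the normal vector $\mu$ and so drop out. Assembling the four pieces $\II(Z,Z)$, $\II(Z,\bar Z)$, $\II(\bar Z,Z)$, $\II(\bar Z,\bar Z)$ yields an identity of the shape
\[
h(v,v) = \frac{C}{|\partial\vr|}\Bigl(\mathrm{Re}\sum_{j,k}\vr_{jk}Z^jZ^k + \sum_{j,k}\vr_{j\kba}Z^jZ^{\kba}\Bigr)
\]
with a positive constant $C$; using $\vr_{j\kba}|_M = a_{j\kba}$ the right-hand side is precisely $\mathrm{Hess}_{\vr}(v)$ applied to the real vector associated with $Z$. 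Since $v \in H(M)$ is equivalent to $Z \in T^{(1,0)}M$, positive semi-definiteness of $h$ on $H(M)$ is equivalent to positive semi-definiteness of $\mathrm{Hess}_{\vr}$ on the complex tangent plane, which by \cref{prop:cconvex} is the $\CC$-convexity of $M$. The main obstacle will be bookkeeping the various conventions—Hermitian versus complex-bilinear extensions of the real metric, the sign of the scalar $\nu$ in the ansatz $H = \nu N$, and the normalization $|\mu| = |H|/\sqrt{2}$—so that the constant $C$ above is unambiguously positive and the identification with $\mathrm{Hess}_{\vr}$ comes out with the correct sign.
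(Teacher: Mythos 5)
Your proposal is correct and follows essentially the same route as the paper: both reduce pseudohermitian $\CC$-convexity to positive semi-definiteness of $\mathrm{Hess}_{\vr}$ on the complex tangent plane by computing the scalar second fundamental form $h$ on $H(M)$ explicitly (the normal component of $\II(Z,Z)$ giving the $\Re\,\vr_{jk}Z^jZ^k$ term, the mixed terms $\II(Z,\bar Z)+\II(\bar Z,Z)$ giving $|Z|^2|H|^2$, tangential and Reeb parts dropping out against $\mu$), and then invoking \cref{prop:cconvex}. The only difference is cosmetic: you derive $\II(Z,Z)$ and $H=-|\partial\vr|^{-1}N$ directly from the flat Chern connection and the differentiated tangency relation, whereas the paper quotes the frame formulas \cref{e:sff} from \cite{reiter2019chern,son2019semi} and argues the $D^{\vr}_{\alpha\beta}(\vr_{\kba})$ terms vanish; your sign bookkeeping for $H=\nu N$ is in any case harmless, since replacing $Z$ by $iZ$ shows either sign yields the same positivity condition on the $J$-invariant space $H(M)$.
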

\begin{proof}
	The proof follows from explicit formulas for the second fundamental form in \cite{son2019semi} and \cite{reiter2019chern}.
	Notice that the Levi-matrix $h_{\alpha \bba}$ in the frame $Z_{\alpha}: = \vr_{w} \partial_{\alpha} - \vr_{\alpha} \partial_w$ is given by
	\begin{equation*}
		h_{\alpha\bba}: = -id\theta(Z_{\alpha},Z_{\bba}) = \vr_{Z\bar{Z}} (Z_{\alpha},Z_{\bba}),
	\end{equation*} 
	where $\vr_{Z\bar{Z}}$ is the Hermitian part of the Hessian of $\vr$. Following \cite{reiter2019chern}, we define the second order differential operator
	\begin{equation*}
		D_{\alpha\beta}^{\varrho}
		:=
		\vr_{w}^2\partial_{\alpha}\partial_{\beta} - \vr_{w}\varrho_{\alpha}\partial_{w}\partial_{\beta}
		- \vr_{w}\varrho_{\beta}\partial_{w}\partial_{\alpha} + \varrho_{\alpha}\varrho_{\beta}\partial_w^2,
	\end{equation*}
	which satisfies
	\begin{equation*} 
		D_{\alpha\beta}^{\vr}(\varphi) = \varphi_{ZZ}(Z_{\alpha}, Z_{\beta}).
	\end{equation*} 
	From \cite[Prop. 2.2]{reiter2019chern}, we have
	\begin{equation*}
		\II(Z_{\alpha} , Z_{\beta})
		=
		- \left(\varrho^{\kba} D^{\varrho}_{\alpha\beta}(\varrho_{\kba}) - h_{\alpha\beta}\right) H,
	\end{equation*}
	On the other hand, from \cite{son2019semi} we have
	\begin{align*}
		\II(Z_{\alpha},Z_{\bba})
		=
		-\vr_{Z\Zba} (Z_{\alpha},Z_{\bba})\, \overline{H}.
	\end{align*}
	Thus, for $Z = \zeta^{\alpha} Z_{\alpha} \in T_p^{(1,0)}(M)$, we have from \eqref{e5}
	\begin{multline*}
		h(Z+\Zba, Z+ \Zba)\bigl|_p 
		=\\
		\sqrt{2}\,|H| \left(\Re \left(-\vr^{\kba} D_{\alpha\beta}(\vr_{\kba})\zeta^{\alpha}\zeta^{\beta} + h_{\alpha\beta}\zeta^{\alpha}\zeta^{\beta}\right) + h_{\alpha\bba} \zeta^{\alpha}\zeta^{\bba}\right)\biggl|_p.
	\end{multline*}
	If $\vr_{j\kba}$'s are constants, then $D_{\alpha\beta}(\vr_{\kba}) = 0$ and thus, the scalar-valued second fundamental form $h$ restricted to $H(M)$ reduces essentially to the restriction of the usual Hessian of $\vr$. Precisely,
	\begin{align*}
		h(Z+\Zba, Z+ \Zba)\bigl|_p 
		& = 
		\sqrt{2}\,|H| \left(h_{\alpha\bba}\zeta^{\alpha}\zeta^{\bba} + \Re \left( h_{\alpha\beta}\zeta^{\alpha}\zeta^{\beta}\right) \right)\biggl|_p \notag \\
		& = 
		\sqrt{2}\,|H|\, \mathrm{Hess}_{\vr}(p; Z).
	\end{align*}
	Hence, when $\vr_{j\kba}$'s are constants, the pseudohermitian $\CC$-convexity condition in Definition~\ref{def1} is exactly as the condition for domains in $\CC^{n+1}$ in Proposition~\ref{prop:cconvex}. The proof is complete.
\end{proof}
\begin{proposition}\label{prop:26}
	Let $(M,\theta)$ is be a semi-isometrically immersed real hypersurface in $(\mathcal{X}, \omega)$ with $(1,0)$-mean curvature vector $H$. Then $(M,\theta)$ is pseudohermitian $\CC$-convex if and only if 
	\begin{align}\label{ea}
		|A(Z)| \leqslant |H|^2 \quad \forall \, Z \in T^{(1,0)}(M),\, Z \ne 0.
	\end{align}
	Moreover, $(M,\theta)$ is stricly pseudohermitian $\CC$-convex iff the strict inequality in \eqref{ea} holds.
\end{proposition}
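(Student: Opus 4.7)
The plan is to use the real-linear bijection $T^{(1,0)}(M) \ni Z \mapsto X := Z + \overline Z \in H(M)$ to transcribe the sign condition ``$h \geqslant 0$ on $H(M)$'' into a statement about the pseudohermitian torsion along $(1,0)$-vectors.

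First, I would expand $\II(X,X)$ bilinearly and apply the structural properties of $\II$ recalled in the introduction: the symmetry $\II(Z,W) = \II(W,Z)$ on $T^{(1,0)}\times T^{(1,0)}$, the identity $\II(Z,\overline W) = \langle Z,\overline W\rangle \overline H$, and the jump relation $\II(\overline W, Z) - \II(Z, \overline W) = i\langle Z,\overline W\rangle T$. Setting $W = Z$ yields the bidegree decomposition
\[
\II(X,X) = \II(Z,Z) + \overline{\II(Z,Z)} + 2|Z|^2 \overline H + i|Z|^2 T,
\]
whose four summands lie respectively in $N^{1,0}M$, $N^{0,1}M$, $N^{0,1}M$, and $TM$.

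Second, I would pair this with $\mu = \frac{1}{2}(H + \overline H)$. The $iT$-term drops out since $T$ is tangent to $M$ while $\mu$ is normal, the Hermitian metric pairs only $(1,0)$ with $(0,1)$, and a routine calculation using $\langle H, \overline H\rangle = |H|^2$ together with $|\mu|^2 = |H|^2/2$ gives
\[
\langle \II(X,X), \mu\rangle = \Re \langle \II(Z,Z), \overline H\rangle + |Z|^2 |H|^2.
\]
The torsion Gauß equation \cref{e:gausstorsion}, already invoked in the proof of \cref{prop:22}, identifies $\langle \II(Z,Z), \overline H\rangle = iA_{\alpha\beta}\zeta^\alpha \zeta^\beta = |Z|^2 A(Z)$, so
\[
h(X,X) = \frac{\sqrt{2}\,|Z|^2}{|H|}\bigl(\Re A(Z) + |H|^2\bigr).
\]

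Finally, since $|Z|^2 > 0$ and $|H| > 0$, the $\CC$-convexity condition $h(X,X) \geqslant 0$ for every $X \in H(M) \setminus \{0\}$ is equivalent to $\Re A(Z) + |H|^2 \geqslant 0$ for every nonzero $Z \in T^{(1,0)}(M)$. Because $A(e^{i\theta}Z) = e^{2i\theta}A(Z)$ and $T^{(1,0)}(M)$ is closed under multiplication by $e^{i\theta}$, rotating $Z$ drives $\Re A(\cdot)$ down to $-|A(Z)|$, so the condition collapses to $|A(Z)| \leqslant |H|^2$ for all nonzero $Z$. The same chain of equivalences with $\geqslant$ replaced by $>$ delivers the strict version. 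I do not anticipate a conceptual obstacle; the only care required is the clean bookkeeping of the $(1,0)/(0,1)$-bidegree decomposition of $\II(X,X)$ and the normalization $|\mu| = |H|/\sqrt 2$ relating the formula for $h$ on real tangent vectors to the norm of $H$.
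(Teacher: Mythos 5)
Your argument is correct and is essentially the paper's own proof: the paper likewise applies the convexity hypothesis to the real vector $iZ+\overline{iZ}$, expands $\II$ bilinearly using $\II(Z,\Zba)=\langle Z,\Zba\rangle\overline H$ together with the torsion Gau\ss{} equation \cref{e:gausstorsion}, and passes from $\Re A(Z)\leqslant |H|^2$ to $|A(Z)|\leqslant |H|^2$ by the same rotation/homogeneity observation. The only point to tighten is your justification that $\langle T,\mu\rangle=0$ ``because $T$ is tangent and $\mu$ is normal'': since $N^{1,0}M$ is defined only as the orthogonal complement of $T^{(1,0)}(M)$ inside $T^{1,0}\mathcal{X}$, orthogonality of $\mu$ to the Reeb direction is not automatic but follows from the relation $iT=H-\overline H$ quoted in the paper, which also combines the last two terms of your decomposition into $2|Z|^2\mu$ and makes $\II(X,X)$ manifestly real.
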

Thus, if $(M,\theta)$ has vanishing pseudohermitian torsion, then for any (real codimension one) semi-isometric CR immersion $\iota \colon (M,\theta) \to (\mathcal{X},\omega)$ the image $\iota(M)$ must be pseudohermitian $\CC$-convex in $(\mathcal{X},\omega)$.
\begin{proof}
	Assume that $M$ is pseudohermitian $\CC$-convex. By the Gauss equation, we have for $Z \in T^{(1,0)}(M)$ with $|Z| = 1$,
	\begin{align*}
		A(Z)
		=
		i\langle \tau Z, Z \rangle 
		=
		\langle \II(Z,Z), \overline{H} \rangle.
	\end{align*}
	By the pseudohermitian $\CC$-convexity, the real-valued form $h$ defined in \eqref{e5} is positive semi-definite on $H(M)$. Applying this positivity to the vector $iZ - i \overline{Z}$, we have
	\[
	h(iZ - i \overline{Z}, iZ - i \overline{Z}) \geqslant 0
	\]
	which is equivalent to
	\[
	\Re\, \langle \II(Z,\Zba), H \rangle -\Re \, \langle \II(Z,Z), \overline{H} \rangle 
	\geqslant 0.
	\]
	Thus,
	\begin{equation*}
		\Re A(Z) 
		= \Re \, \langle \II(Z,Z), \overline{H} \rangle 
		\leqslant
		\Re\, \langle \II(Z,\Zba), H \rangle 
		=
		|H|^2.
	\end{equation*}
	Furthermore, by a difference in homogeneity, this is equivalent to
	\begin{align*}
		|A(Z)| \leqslant |H|^2 \quad \forall \ Z \in T^{(1,0)}(M),
	\end{align*}
	\edit{as desired.}	The proof of the converse uses similar calculation. We leave the details to the readers.
\end{proof}
\begin{example}[cf. Example~2.5.14 in \cite{andersson2012complex}]\rm Consider for $t\in [0,1]$ the Hargtog domain
	\begin{align*}
		E_t = \left\{(z,w) \in \CC^2 \colon -1 + |z|^2 + |w|^2 + t\left(\Re z^2\right)^2 <0\right\}.
	\end{align*}
	Evidently, $E_t$ is bounded. It has been shown in \cite{andersson2012complex} that $E_t$ has smooth boundary and is convex for $0< t < 3/4$ and non-convex for $3/4\leqslant t \leqslant 1$. However, it is $\CC$-convex for $t\in [0,1]$ and thus the $\CC$-convexity is a weaker condition than the ``usual'' convexity.
	
	For any $t$, the Tanaka-Webster scalar curvature $R$ of $\theta:=i\bar{\partial}\vr$ at a point $(0,e^{i\tau})$ is $2(1-t)$. If $t = 1$ then $R=0$ on the circle $(0,e^{i\tau})$. Thus, the $\CC$-convexity alone is not enough to have $R$ be bounded below by the Graham-Lee transverse curvature. We point out that each $E_t$ can be semi-isometrically embedded as real codimension 3 CR submanifold of $\CC^3$ with the flat metric by the embedding $(z,w) \mapsto (z,w, \sqrt{t/2}\,z^2)$. But the notion of pseudohermitian $\CC$-convexity is not well-defined for higher codimensional immersions.
\end{example}
\section{Proofs of Theorem~\ref{thm:m1} and its corollaries}

\begin{proof}[Proof of Theorem~\ref{thm:m1}] The first statement follows from Proposition~\ref{prop:26}. In particular, if $(M,\theta)$ is pseudohermitian $\CC$-convex, then $|A(Z)|\leqslant |H|^2$. Plugging this into \eqref{e:36} we obtain
\begin{align*}
	K(Z) \geqslant \frac12 \tilde{K}(Z) + \frac{1}{2} |H|^2
\end{align*}
which proves inequality \eqref{e:mest}.

To show that inequalities \eqref{e:mest} and \eqref{e:mesta} are sharp, we consider the following real hypersurface which has been studied in various papers, e.g., \cite{reiter2019chern} and the references therein. Precisely, let
\begin{equation}\label{e:ex}
	\vr (z) : = \sum_{j=1}^{n+1}|z_j|^2 + \Re \sum_{j=1}^{n+1} z_j^2 -1,
\end{equation}
and let $E =\left\{z\in \CC^{n+1} \colon \vr(z) = 0\right\}$ and $\theta_E: = \iota^{\ast}(i\bar{\partial}\vr)$ be a pseudohermitian structure on $E$. In the frame $Z_{\alpha}:=\vr_{w} \partial_{\alpha} - \vr_{\alpha}\partial_w$, we have
\begin{equation*}
	|\partial \vr|^2 = \sum_{j=1}^{n+1} |z_j + \zba_j|^2 = 2\vr +2, \quad 	h_{\alpha\beta} = h_{\alpha\bba} = \vr_w^2\delta_{\alpha\beta} + \vr_{\alpha}\vr_{\beta},
\end{equation*}
where $\alpha, \beta =1,2,\dots, n$ and $w: = z_{n+1}$. The Tanaka-Webster pseudohermitian curvature tensor in this frame is computed in, e.g., \cite{reiter2019chern}. Precisely,
\begin{equation*}
	R_{\alpha\bba\gamma\sba}
	=
	- \frac{h_{\alpha\gamma}h_{\bba \sba}}{2} + \frac{h_{\alpha\bba}h_{\gamma\sba} + h_{\alpha\sba}h_{\gamma\bba}}{2}.
\end{equation*}
Therefore, for $Z = \zeta^{\alpha} Z_{\alpha} \in T^{(1,0)}(M)$ with $|Z|^2 = h_{\alpha\bba} \zeta^{\alpha} \zeta^{\bba} = 1$, we have
\begin{align}\label{e:92}
	K(Z) = \frac{1}{2} R_{\alpha\bba\gamma\sba}\zeta^{\alpha} \zeta^{\bba} \zeta^{\gamma} \zeta^{\sba} = \frac12 - \frac{1}{4}\left|h_{\alpha\gamma} \zeta^{\alpha} \zeta^{\gamma}\right|^2.
\end{align}

Observe that $(E,\theta_E)$ is semi-isometrically embedded in $\CC^{n+1}$ with the euclidean metric (\edit{thus}, $\tilde{K} = 0$) and 
\begin{align}\label{ec}
	|H|^2 = |\partial \vr|^{-2} = \frac{1}{2}.
\end{align}
To show that $E$ is pseudohermitian $\CC$-convex, we observe that
\begin{align*}
	A_{\alpha\beta} = \frac12 h_{\alpha\beta},
\end{align*}
and hence
\begin{align*}
	A(Z) = \frac{1}{2}h_{\alpha\beta} \zeta^{\alpha}\zeta^{\beta}
	=
	\frac{1}{2} \vr_{w}^2 \sum_{\alpha=1}^n (\zeta^{\alpha})^2 + \frac{1}{2} \left(\sum_{\alpha = 1}^n \vr_{\alpha} \zeta ^{\alpha}\right)^2,
\end{align*}
Thus,
\begin{align}\label{e:95}
	|A(Z)|
	\leqslant
	\frac{1}{2} |\vr_w|^2 \sum_{\alpha=1}^n |\zeta^{\alpha}|^2 + 	\frac{1}{2} \left|\sum_{\alpha = 1}^n \vr_{\alpha} \zeta^{\alpha}\right|^2
	=
	\frac{1}{2}h_{\alpha\bba}\zeta^{\alpha}\zeta^{\bba} = \frac{1}{2},
\end{align}
with the equality occurs when, e.g., $\zeta^{\alpha}$'s are all real, as $\vr_{\alpha}$'s are real.
This and \eqref{ec} mean that $E$ is (non-strict) pseudohermitian $\CC$-convex by Proposition~\ref{prop:26}, as desired.
Equations \eqref{e:92} and \eqref{e:95} also show that 
\begin{equation}
	\frac{1}{4}\leqslant K(Z) \leqslant \frac{1}{2}.
\end{equation}
Thus, for $(E,\theta_E)$ the equalities in \eqref{e:mesta} and \eqref{e:mest} do occur.
\end{proof}
\begin{remark}
	It is easy to see that as a consequence of the Cauchy-Schwarz inequality the CR holomorphic bisectional of $(E,\theta_E)$ in the proof above is nonnegative but not strict positive.
\end{remark}
As already briefly discussed in the introduction, on a CR manifold the positivity of the Tanaka-Webster scalar curvature does not imply the existence of positive scalar curvature Riemannian metric. In particular, it does not implies the positivity of the scalar curvature of any adapted Webster metric. In \cite{cao2020}, Cao, Chang, and Chen introduced the following notion exhibiting the importance of the torsion as follows: A closed CR 3-manifold is said to have $C_0$-positive pseudohermitian curvature if there exists a pseudohermitian structure $\theta$ having curvature and torsion satisfy
\begin{align}\label{e:c0pos}
	R|Z|^2 + C_0 \mathrm{Tor}(Z,Z) > 0 \quad \forall Z \in T^{(1,0)}(M).
\end{align}
They proved \cite[Theorem~1.1]{cao2020} that if a closed CR 3-manifold is $C_0$ positive for $C_0 \geqslant \frac12$, then the adapted Riemannian metric
\begin{align}
	g_{\lambda}:=d\theta + \lambda^{-2} \theta \otimes \theta 
\end{align}
has positive scalar curvature \edit{for some constant $\lambda$}.

Cao et al's notion of $C_0$-positivity can be generalized to higher dimensions as follows.
\begin{definition}
	A pseudohermitian manifold $(M^{2n+1},\theta)$ of real dimension $2n+1\geqslant 3$ is said to have \emph{$C_0$-positive holomorphic sectional curvature} if $\forall Z \in T^{(1,0)}(M),\, Z\ne 0$,
	\begin{equation}\label{e:c0pos1}
		K(Z) - C_0 |A(Z)| > 0 .
	\end{equation}
\end{definition}
When $n=1$, \eqref{e:c0pos} is equivalent to \eqref{e:c0pos1} since $R|Z|^2 = 2K(Z)$ and $\tor(Z,Z) = 2\Re A(Z)$.

We obtain the following proposition which implies Corollary~\ref{cor13a}.
\begin{proposition}
	If $(M^{2n+1},\theta)$ is a strictly pseudohermitian $\CC$-convex real hypersurface in a K\"ahler manifold $(\mathcal{X}, \omega)$ with nonnegative holomorphic sectional curvature. Then $M$ has $\frac{1}{2}$-positive holomorphic sectional curvature. In particular, if $n=1$ and $M^3$ is compact, then there is a real parameter $\lambda > 0$ such that the (adapted) Riemannian metric
	\begin{align}
		g_{\lambda}:=d\theta + \lambda^{-2} \theta\otimes\theta 
	\end{align}
	has positive Riemannian scalar curvature.
\end{proposition}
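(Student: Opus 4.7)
The plan is to verify that $(M,\theta)$ is $\tfrac{1}{2}$-positive in the Cao–Chang–Chen sense, and then to invoke \cite[Theorem~1.1]{cao2020}. Throughout I will use that $\dim_{C\!R} M = 1$, so $T^{(1,0)}M$ has rank one and the holomorphic sectional quantities $K$, $|H|^2$, and $|A(Z)|$ are scalar functions on $M$ (independent of the choice of non-vanishing $Z$), and $R = 2K$. The quantity $\mathrm{Tor}(Z,Z) = 2\Re A(Z)$ for unit $Z$ still depends on the phase of $Z$, and is bounded below by $-2|A(Z)|$.

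The main calculation runs as follows. By \cref{prop:26}, strict pseudohermitian $\CC$-convexity is equivalent to the strict pointwise inequality $|A(Z)| < |H|^2$ for every nonzero $Z \in T^{(1,0)}(M)$; since $|A(Z)| \geq 0$, this forces $|H|^2 > 0$ everywhere on $M$. Applying \cref{prop:22} to unit $Z$ gives
\begin{equation*}
R = \tilde{K}(Z) + 2|H|^2 - |A(Z)|^2/|H|^2,
\end{equation*}
and combining this with $\mathrm{Tor}(Z,Z) \geq -2|A(Z)|$ yields
\begin{equation*}
R + \tfrac{1}{2}\mathrm{Tor}(Z,Z) \geq \tilde{K}(Z) + 2|H|^2 - |A(Z)|^2/|H|^2 - |A(Z)|.
\end{equation*}
Setting $x := |A(Z)|/|H|^2 \in [0,1)$, the last three terms factor as $|H|^2(2 - x - x^2) = |H|^2(1-x)(2+x) > 0$. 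Together with $\tilde{K}(Z) \geq 0$, this produces the strict pointwise inequality $R|Z|^2 + \tfrac{1}{2}\mathrm{Tor}(Z,Z) > 0$ for every nonzero $Z \in T^{(1,0)}(M)$.

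A compactness argument on the unit sphere bundle of $T^{(1,0)}M$ (which is compact since $M$ is) then upgrades this to a uniform positive lower bound, so $(M,\theta)$ is $\tfrac{1}{2}$-positive in the sense of Cao, Chang, and Chen, and their theorem furnishes the desired $\lambda > 0$. The only delicate point is the algebraic identity $2 - x - x^2 = (1-x)(2+x)$, which is only just sufficient: the Cao–Chang–Chen threshold $C_0 = \tfrac{1}{2}$ turns out to be exactly what is needed to absorb the torsion term $-|A(Z)|$ against the curvature contribution $2|H|^2 - |A(Z)|^2/|H|^2$ coming from the Gauß equation, so there is no slack in the argument.
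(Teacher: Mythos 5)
Your proof is correct and follows essentially the same route as the paper: you establish the pointwise strict inequality $R + \tfrac12\mathrm{Tor}(Z,Z) > 0$ from the Gau\ss{}-equation relation among $K$, $\tilde{K}$, $|H|^2$, $|A(Z)|$ together with the strict convexity characterization $|A(Z)| < |H|^2$ of \cref{prop:26}, and then invoke Theorem~1.1 of \cite{cao2020}. The only cosmetic difference is that you work with the exact identity of \cref{prop:22} and factor $2 - x - x^2 = (1-x)(2+x)$, whereas the paper passes through the already-derived inequality $K(Z) \geqslant \tfrac12\tilde{K}(Z) + \tfrac12|H|^2$ of \cref{thm:m1}; this yields a marginally sharper pointwise lower bound but is the same underlying computation.
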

\begin{proof}
	If $\mathcal{X}$ has nonnegative holomorphic sectional curvature $\tilde{K} \geqslant 0$, then from Theorem~\ref{thm:m1}, we deduce that for $|Z| =1 $,
	\begin{align*}
		K(Z) \geqslant \frac{1}{2}|H|^2 \geqslant \frac{1}{2}|A(Z)|
		\geqslant 0,
	\end{align*}
	and hence $(M,\theta)$ is $\frac12$-positivity. 
	Observe that the inequality is strict since $M$ is supposed to be strictly pseudohermitian $\CC$-convex. When $n=1$, it follows that $(M,\theta)$ is $\frac12$-positive in the sense of \cite{cao2020} and hence the corollary follows from Theorem~1.1 of \cite{cao2020}.
\end{proof}
\begin{proof}[Proof of Corollary~\ref{cor13}]
	Let $\omega = i a_{j\kba} dz^j \wedge d\zba^k$, then $\tilde{K} = 0$ and $\theta = \iota ^{\ast} \omega$. On the other hand, the squared mean curvature $|H|^2$ can be computed easily (see \cite{son2019semi}), namely, 
	\begin{align*}
		|H|^2 = \frac{1}{|\partial\vr|^2}.
	\end{align*}
	Thus the corollary follows from Theorem~\ref{thm:m1}.
\end{proof}
\begin{proof}[Proof of Corollary~\ref{cor15}] From Corollary~\ref{cor13}, we find that
	\begin{equation*}
		R = 2K(Z) \geqslant |\partial\vr|^{-2} > 0.
	\end{equation*}
	Thus, we can apply \eqref{ed} to deduce that $\lambda_1(\Box_b) \geqslant \frac{1}{2}\min_M |\partial\vr|^{-2}$ and complete the proof.
\end{proof}
\begin{remark}
	Consider the Reinhardt real hypersurface in $\CC^2$ given by
	\begin{align*}
		M_{\epsilon}: = \left\{(z,w) \in \CC^2 \colon (\log |z|)^2 + (\log |w|)^2 = \epsilon^2\right\}, \quad \epsilon > 0.
	\end{align*}
	This example (and its general dimension version) has been studied in various papers; see, e.g., \cite{reiter2019chern} and the references therein.
	There exists an unique pseudohermitian structure $\theta_{\epsilon}$ on $M_{\epsilon}$ such that $(M_{\epsilon},\theta_{\epsilon})$ is locally isomorphic to $(E,\theta_E)$ (with $n=1$), i.e., there exists a local CR diffeomorphism $\varphi$ in a neighborhood of each point $p\in M_{\epsilon}$ into $E$ such that $\theta_{\epsilon} = \varphi^{\ast}\theta_E$. Thus, local consideration on $M_{\epsilon}$ and $E$ are the same. Thus, on $(M_\epsilon,\theta_{\epsilon})$, we have $R = |A_{11}| = \frac{1}{2}$ and $M_{\epsilon}$ is $C_0$-positive for any $C_0 < \frac{1}{2}$. Note that $M_{\epsilon}$ is diffeomorphic to $\mathbb{T}^3$ and hence does \textit{not} support any positive scalar curvature Riemannian metric by an well-known theorem of Schoen-Yau \cite{schoen} and Gromov-Lawson \cite{gl}. Thus, the threshold $\frac12$ for $C_0$ in the aforementioned theorem of Cao, Chang, and Chen \cite{cao2020} is sharp: The theorem cannot be extended to the case of $C_0$-positivity with $C_0 < \frac{1}{2}$.
\end{remark}
\section{An example: Brieskorn manifolds}\label{sec:brieskorn}
The purpose of this section is to present some examples of semi-\break-isometrically immersed CR manifolds in a complex euclidean space and in a K\"ahler manifold of nonpositive holomorphic sectional curvature. They are the CR links of the well-known Brieskorn-Pham singularities, \edit{the Brieskorn manifolds,} which have been studied in the literature from many aspects. In several complex variables and CR geometry, it was studied by, e.g., Ebenfelt, Huang, and Zaitsev \cite{ehz} who showed that to certain extend the local CR geometry of a CR link \edit{(of a possibly more general isolated singularity)} determines the local complex structure of the singularity; see \cite{ehz} and the references therein. \edit{These examples show that the nonnegativity condition of $\tilde{K}$ in Corollary 1.3 is necessary. We also give an explicit formula for their Tanaka-Webster holomorphic sectional curvature.}

A Brieskorn manifold is the CR link of a Brieskorn-Pham variety defined by $p=0$ where
\begin{equation}\label{bp}
	p(z_0,z_1,\dots, z_N) := \sum_{j=0}^N z_j ^{a_j}
\end{equation}
and $a_j$'s are integers, $a_j\geqslant 2$. These manifolds were also analyzed by Tanaka in \cite{tanaka1975differential} as examples of ``normal'' CR manifolds.

The variety $V: = p^{-1}(0)$ has an isolated singularity at the origin. Put
\begin{equation*}
	d = \mathrm{lcm}\{a_j\colon j=0,2,\dots, N\}, 
	\ w_j = d / a_j.
\end{equation*}
Then $p$ is a weighted homogeneous polynomial with weights $(w_0, w_1,\dots; d)$, that is,
\begin{equation*}
	p(\lambda^{w_0}z_0, \dots , \lambda^{w_N} z_N) = \lambda^d p(z_0, \dots , z_N).
\end{equation*}
The link of $V$ at the origin is a Brieskorn manifold $M(r): = V \cap \{|z|^2 = r\}$, which is strictly pseudoconvex. In fact, if $\zeta$ is $|z|^2$ restricted to $V$, then away from the origin $\zeta$ is a strictly plurisubharmonic defining function for $M(r)$ in $V$ (cf. \cite{tanaka1975differential}).

Let
\begin{equation}\label{e:51}
	\theta: = \iota ^{\ast}\left(\frac{i}{2} \sum_{j} w_j^{-1} \left(z_j d\zba_j - \zba_j dz_j\right)\right).
\end{equation}
Then $\theta$ is a pseudohermitian structure on $M(r)$ (see \cite{tanaka1975differential}). Let $\CC^{N+1}$ be equipped with the K\"ahler metric with the symplectic form
\begin{equation}
	\omega = i \sum_{j} w_j^{-1} dz_j \wedge d\zba_j
\end{equation}
whose primitives includes 
\begin{equation*}
	\tilde{\theta}: = \frac{i}{2} \sum_{j} w_j^{-1} \left(z_j d\zba_j - \zba_j dz_j\right).
\end{equation*}
Thus, $\theta = \iota^{\ast} \tilde{\theta}$ and
\begin{equation*}
	d\theta = \iota^{\ast}\omega.
\end{equation*}
That is, $(M(r),\theta)$ is semi-isometrically immersed (as a real codimension 3 submanifold) in $(\CC^{N+1},\omega)$.

Let $T$ denote the vector field generating the $\mathbb{S}^1$-action induced by the $\CC^{\ast}$-action on $V$. Thus,
\begin{equation}\label{e:55}
	T = i\sum_{j=0}^{N} w_j\left(z_j\frac{\partial}{\partial z_j} - \zba_j\frac{\partial}{\partial \zba_j}\right).
\end{equation}
Then $T$ is the Reeb field associated to $\theta$.
Since the $\mathbb{S}^1$-action which $T$ generates is holomorphic, $\theta$ has vanishing pseudohermitian torsion \cite{tanaka1975differential,webster1978pseudo}. We obtain
\begin{proposition}\label{prop:24}
	The immersion $\iota \colon (M,\theta) \longrightarrow (V,(\iota_V^{\CC^n})
	^{\ast} \omega)$ is a semi-isometric totally umbilical CR immersion.
\end{proposition}
The proof follows from the fact that $T$ is holomorphic (via \cite{tanaka1975differential} and \cite{webster1978pseudo}) and the relation between the torsion and the second fundamental form in \eqref{e:gausstorsion}. We leave the details to the readers.
\begin{remark}
	By the well-known results of Gromov-Lawson \cite{gl} and Milnor \cite{milnor} (see also \cite{ohta}), we can find among Brieskorn manifolds many examples of pseudohermitian $\CC$-convex real hypersurfaces in K\"ahler surfaces of negative holomorphic sectional curvature admitting \textit{no} positive scalar Riemannian metrics. This shows that the nonnegativity of $\tilde{K}$ in Corollary~\ref{cor13a} is necessary.
\end{remark}

Explicit computations of the (Tanaka-Webster) scalar, Ricci, as well as the full curvature tensor have been done only for few examples, see e.g., \cite{li2011cr,chanillo2012embeddability, son2019semi}. \edit{Proposition~\ref{prop44} below gives another one: We use the Gauss equations in Proposition~\ref{prop:ge} to compute the holomorphic sectional curvature of a Brieskorn manifold.}
\begin{proposition}\label{prop44}
	The Tanaka-Webster holomorphic sectional curvature of the Brieskorn CR manifold $(M(r),\theta)$ is given by
	\begin{equation}\label{e69}
		K(Z)
		=
		\sum_{j=0}^N w_j |z_j|^2-\frac{1}{2\|\xi\|^2}\left|\sum_{k=0}^N a_k (a_k-1) z_k ^{a_k-2} (Z^k)^2\right|^2,
	\end{equation}
	where $Z = Z^k \partial_k$ belongs to $T^{(1,0)}(M)$ \edit{and has} unit length if and only if
	\begin{equation*}
		\sum_{k=0}^N a_kZ^k z_k^{a_k-1} = 0,
		\quad 
		\sum_{k=0}^N Z^k \zba_k = 0,
		\quad \text{and}\
		\sum_{k=0}^N w_k^{-1}|Z^k|^2 = 1.
	\end{equation*}
\end{proposition}
\edit{Our computation below is somewhat similar to that of Vitter \cite{vitter1974curvature} who computed curvatures of a general complex hypersurface. In fact, we shall use the following result.}
\begin{proposition}[Vitter \cite{vitter1974curvature}]
	If $Z$ and $W$ are unit $(1,0)$-vectors tangent to the non-singular locus of $V$, $Z = Z^j \partial_j$ and $W = W^j\partial_j$, then the holomorphic bisectional curvature is
	\begin{equation}\label{vitter}
		\tilde{B}(Z,W) 
		=
		-\frac{1}{\|\xi\|^2}\left|\sum_{j=0}^N \sum_{k=0}^N \frac{\partial^2 p}{\partial z_j \partial z_k} Z^jW^k\right|^2.
	\end{equation}
\end{proposition}
\begin{proof}
We reproduce Vitter's proof in \cite{vitter1974curvature} for the sake of completeness. This is also helpful since we also need a formula for the second fundamental form of the immersion $\iota_V^{\mathbb{C}^n}\colon V \to \mathbb{C}^n$ in the proof for our latter purpose.

For any $X = X^j \partial_j \in T^{(1,0)} (V)$ ($q\in V)$, we must have
\begin{equation*}
	0 = X(p) = \sum_{j=0}^N X^j \frac{\partial p}{\partial z_j}
	=
	\sum_{j=0}^N w_j^{-1} X^j \left(w_j \frac{\partial p}{\partial z_j} \right).
\end{equation*}
Therefore, if 
\begin{equation*}
	\xi = \sum_{j=1}^N w_j \left(\overline{\frac{\partial p}{\partial z_j}} \right) \frac{\partial}{\partial z_j},
\end{equation*}
then $N : = \xi/\|\xi\|$ is a $(1,0)$-normal vector of $V$.

For any $(0,1)$-vector $\Wba$ that is tangent to $V$, we have
\begin{equation*}
	\tilde{\nabla}_{\Wba} N
	=
	\tilde{\nabla}_{\Wba}\left(\frac{\xi}{\|\xi\|}\right)
	=
	\left(\Wba \left(\frac{1}{\|\xi\|}\right)\right) \xi + \left(\frac{1}{\|\xi\|}\right)\tilde{\nabla}_{\Wba} \xi.
\end{equation*}
Since $\partial/\partial z_j$ is parallel with respect to $\tilde{\nabla}$, we have
\begin{align*}
	\tilde{\nabla}_{\Wba} \xi 
	=
	\sum_{j=0}^N \left(\Wba\left(w_j \overline{\frac{\partial p}{\partial z_j}} \right)\right) \frac{\partial}{\partial z_j}
	=
	\sum_{j=0}^N \left(\sum_{k=0}^N \Wba^k w_j \left(\overline{\frac{\partial^2 p}{\partial z_j \partial z_k}}\right)\right)\frac{\partial}{\partial z_j}.
\end{align*}
Therefore, for any $\overline{Y} = \overline{Y}^j \partial _{\jba}$, we have
\begin{align*}
	\langle \tilde{\nabla}_{\Wba} \xi, \overline{Y} \rangle 
	& =
	\sum_{j=0}^N w_j^{-1}\left(\sum_{k=0}^N \Wba^k w_j \left(\overline{\frac{\partial^2 p}{\partial z_j \partial z_k}}\right)\right) \overline{Y}^j \notag \\
	& =
	\sum_{j=0}^N \sum_{k=0}^N \Wba^k\left(\overline{\frac{\partial^2 p}{\partial z_j \partial z_k}}\right) \overline{Y}^j.
\end{align*}
If $\overline{Y}$ is tangent to $V$, then by Weingarten formula, we have
\begin{align*}
	\left\langle N , \II_V^{\CC^N}(\Wba,\overline{Y}) \right\rangle
	& =
	-\left\langle \tilde{\nabla}_{\Wba} N , \overline{Y} \right\rangle \notag \\
	& = 
	-\frac{1}{\|\xi\|} \sum_{j=0}^N \sum_{k=0}^N \Wba^k\left(\overline{\frac{\partial^2 p}{\partial z_j \partial z_k}}\right) \overline{Y}^j.
\end{align*}
Thus,
\begin{equation}\label{e:65}
	\II_V^{\CC^N}(\Wba,\overline{Y}) 
	=
	-\frac{1}{\|\xi\|} \left(\sum_{j=1}^N \sum_{k=1}^N \Wba^k\left(\overline{\frac{\partial^2 p}{\partial z_j \partial z_k}}\right) \overline{Y}^j\right) \overline{N},
\end{equation}
where
\begin{equation*}
	\|\xi\|^2
	=
	\sum_{j=0}^N w_j \left|\frac{\partial p}{\partial z_j}\right|^2.
\end{equation*}
Thus, the Gauss equation for K\"ahler submanifolds, together with \eqref{e:65} yields the desired equation \eqref{vitter} immediately.
\end{proof}
\begin{proof}[Proof of Proposition~\ref{prop44}]
When $p$ is the Brieskorn-Pham polynomial given in \eqref{bp} we have
\begin{align*}
	\left\|\xi\right\|^2 
	= d\sum_{j=0}^N a_j |z_j |^{2a_j -2},
\end{align*}
and, by Vitter's formula above, the holomorphic bisectional curvature of the Brieskorn-Pham variety is 
\begin{equation*}
	\tilde{B}(Z,W)
	=
	-\frac{1}{\|\xi\|^2}\left|\sum_{k=0}^N a_k (a_k-1) z_k ^{a_k-2}Z^k W^k\right|^2.
\end{equation*}

Consider the CR immersion $\iota_M^V \colon M\to V$. As $(M,\theta)$ has vanishing torsion, we have
\begin{equation*}
	0 = \left\langle \tau Z, W \right\rangle 
	=
	- \left\langle H, \II_M^V(Z, W) \right\rangle,
\end{equation*}
and since the normal bundle $N^{(1,0)}(M)$ in $T^{(1,0)}(V)$ has 1-dimensional fibers while $H\ne 0$, we have 
\begin{equation}
	\II_M^V(Z, W) = 0 , \quad \forall\, Z,W \in T^{(1,0)}(M).
\end{equation}
If $H$ is the $(1,0)$-mean curvature vector of the immersion $\iota_M^{\CC^n}$ \edit{and $T$ is the Reeb field}, then \edit{by \cite{son2019semi}} $iT = H -\overline{H}$ and thus (by \eqref{e:55})
\begin{equation}\label{e73}
	H = - \sum_{j=0}^{N} w_j z_j \frac{\partial}{\partial z_j}.	
\end{equation}
The restriction of $H$ to \edit{$M$} is the $(1,0)$-mean curvature vector of the immersion $\iota_M^V$. Then
\begin{equation*}
	B(Z,W) = \frac{1}{2}\tilde{B}(Z,W) + \frac{1}{2}|H|^2(1+ \langle Z, \overline{W}\rangle ^2).
\end{equation*}
From this, equation \eqref{e73}, and the formula of Vitter \eqref{vitter} we obtain the formula for the holomorphic bisectional curvature:
\begin{equation*}
	B(Z,W)
	=
	\frac{1}{2}(1+\langle Z ,\Wba\rangle^2)\sum_{j=0}^N w_j |z_j|^2-\frac{1}{2\|\xi\|^2}\left|\sum_{k=0}^N a_k (a_k-1) z_k ^{a_k-2} Z^k W^k\right|^2,
\end{equation*}
which reduces to the desired formula \eqref{e69} for the holomorphic sectional curvature by setting $Z= W$.
\end{proof}

We conclude this section by pointing out a quite interesting property of the Brieskorn manifolds, the nowhere CR umbilicity. On a Levi-nondegerate CR manifold of dimension at least 5 a point $p$ is said to be \textit{CR umbilical} if the Chern-Moser-Weyl tensor (i.e., the completely tracefree part of the Tanaka-Webster curvature tensor $R_{\alpha\bba\gamma\sba}$ \cite{webster1978pseudo}) with respect to some (equivalently, to all) pseudohermitian structure $\theta$ vanishes at $p$. The Chern-Moser-Weyl tensor and CR umbilicality are important biholomorphic invariants. Compact nowhere CR umbilical CR manifolds are interesting as they admit a distinguished contact form, the principal contact form, with respect to which the Chern-Moser-Weyl tensor has unit norm. The first known examples of such manifolds were given by Webster \cite{webster2000holomorphic}. They are the generic real ellipsoids in complex space. Webster also proved that the corresponding Reeb fields are ``completely integrable.'' A principal contact form, if it exists, is useful for studying CR (or biholomorphic) equivalences of CR manifolds (or the complex domains they bound); see, e.g., \cite[Section 5]{reiter2019chern}.

The Brieskorn manifolds give a wealth of examples of nowhere CR umbilical manifolds.
\begin{corollary}\label{cor:26} Let $a = (a_0, a_1,\dots , a_N)$, $N\geqslant 3$, and let $M(r)$ be the Brieskorn manifold. If $a_k \geqslant 2$ for all $k$, then the Chern-Moser-Weyl tensor of $M$ never vanishes and hence $M$ admits a unique principal contact form.
\end{corollary}
\begin{proof}
Since $(M,\theta)$ is totally umbilical in $V$, we have from Proposition~\ref{prop:24}
\begin{equation}
	\tf \II_{M}^{\CC^N} = \tf \II_{V}^{\CC^N}\bigl|_{M}
\end{equation}
where $\tf$ denotes the trace-free part of a tensor. Equation \eqref{e:65} above shows that $\tf \II_{V}^{\CC^N}\bigl|_{M}$ never vanishes on $M$ if $a_k \geqslant 2$ for all $k$. Thus the proof follows from \cite[Proposition 3.3]{son2019semi}.
\end{proof}


\begin{thebibliography}{14}
	
	\bibitem{andersson2012complex}
	M.~Andersson, M.~Passare, and R.~Sigurdsson.
	\newblock {\em Complex convexity and analytic functionals}, Progress in Mathematics, 225.
	\newblock Birkh{\"a}user Verlag, Basel, 2004.
	
	\bibitem{cao2020}
	H. Cao, S. Chang, and C. Chen. 
	\newblock {$C_0$-positivity and a classification of closed three-dimensional CR torsion solitons.}
	\newblock {\em Math. Z.}, 296:1065–1080 2020. DOI: 10.1007/s00209-020-02471-2.
	
	\bibitem{chanillo2012embeddability}
	S.~Chanillo, H.-L. Chiu, and P.~Yang.
	\newblock {Embeddability for 3-dimensional {C}auchy--{R}iemann manifolds and
		{CR} {Y}amabe invariants}.
	\newblock {\em Duke Math. J.}, 161(15):2909--2921, 2012.
	
	\bibitem{chanillo2012embedded}
	S.~Chanillo, H.-L. Chiu, and P.~Yang.
	\newblock Embedded three-dimensional {CR} manifolds and the non-negativity of
	{P}aneitz operators.
	\newblock {\em Geometric analysis, mathematical relativity, and nonlinear
		partial differential equations}, 599:65--82, 2012.
	
	\bibitem{ehz}
	P. Ebenfelt, X. Huang, and D. Zaitsev.
	\newblock {Rigidity of CR-immersions into Spheres.}
	\newblock {\em Comm. Anal. Geom.}, 12.3: 631-670, 2004.
	
	\bibitem{gl}
	M. Gromov and H. B Lawson.
	\newblock Positive scalar curvature and the Dirac operator on complete Riemannian manifolds.
	\newblock {\em Inst. Hautes Études Sci. Publ. Math. No. 58 (1983)}, 83--196, 1984.
	
	\bibitem{li2018}
	S.-Y.-Li, G. Lin, and D. N. Son.
	\newblock The sharp upper bounds for the first positive eigenvalue of the Kohn Laplacian on compact strictly pseudoconvex real hypersurfaces.
	\newblock {\em Math. Z.}, Vol. 288(3-4):949-963. DOI: 10.1007/s00209-017-1922-z, 2018.
	
	\bibitem{lsw}
	S.-Y. Li, D. N. Son, and X. Wang.
	\newblock A new characterization of the CR sphere and the sharp eigenvalue estimate for the Kohn Laplacian.
	\newblock {\em Adv. Math.} 281: 1285-1305, 2015.
	
	\bibitem{li2011cr}
	S.-Y. Li and M.~Tran.
	\newblock On the {CR}-{O}bata theorem and some extremal problems associated to
	pseudoscalar curvature on the real ellipsoids in $\mathbb{C}^{N+1}$.
	\newblock {\em Trans. Amer. Math. Soc.},
	363(8):4027--4042, 2011.
	
	\bibitem{milnor}
	J. Milnor.
	\newblock On the 3-dimensional Brieskorn manifolds $M(p,q,r)$. 
	\newblock {\em Knots, groups, and 3-manifolds (Papers dedicated to the memory of R. H. Fox), pp. 175--225. Ann. of Math. Studies,} No. 84, Princeton Univ. Press, Princeton, N. J., 1975. 
	\bibitem{ohta}
	H. Ohta.
	\newblock Brieskorn manifolds and metrics of positive scalar curvature. 
	\newblock {\em Minimal surfaces, geometric analysis and symplectic geometry} (Baltimore, MD, 1999), 231–236,
	Adv. Stud. Pure Math., 34, Math. Soc. Japan, Tokyo, 2002.
	\bibitem{reiter2019chern}
	M.~Reiter and D.~N. Son.
	\newblock On the Chern-Moser-Weyl tensor of real hypersurfaces.
	\newblock {\em J. Math. Soc. Japan,} 73(1) 77-98, 2021. DOI: 10.2969/jmsj/82598259, 2020.
	
	\bibitem{schoen}
	R. Schoen and , S. T. Yau. 
	\newblock On the structure of manifolds with positive scalar curvature.
	\newblock {\em Manuscripta math.} 28(1-3), 159-183, 1979.
	
	\bibitem{son2019semi}
	D.~N. Son.
	\newblock {Semi-isometric {CR} immersions of {CR} manifolds into {K}{\"a}hler
		manifolds and applications}.
	\newblock {\em Ann. Sc. Norm. Super. Pisa Cl. Sci. (5)}, accepted, pp. 17. DOI: 10.2422/2036-2145.201902\_008, 2020.
	
	\bibitem{takeuchi}
	Y. Takeuchi.
	\newblock {Non-negativity of the CR Paneitz operator for embeddable CR manifolds.}
	\newblock {\em Duke Math. J.}, accepted.
	
	\bibitem{tanaka1975differential}
	N.~Tanaka.
	\newblock {\em {A differential geometric study on strongly pseudoconvex manifolds}}.
	\newblock Lect. Math., Dep. Math., Kyoto Univ., No. 9. Kinokuniya Book-Store Co., Ltd., Tokyo, 1975. 
	
	\bibitem{vitter1974curvature}
	A. Vitter.
	\newblock On the curvature of complex hypersurfaces.
	\newblock {\em Indiana Univ. Math. J.}
	23(9):813-826, 1974.
	
	\bibitem{webster1978pseudo}
	S.~M. Webster.
	\newblock {Pseudo-Hermitian structures on a real hypersurface}.
	\newblock {\em J. Diff. Geom.}, 13(1):25--41, 1978.
	
	\bibitem{webster2000holomorphic}
	S.~M. Webster.
	\newblock {Holomorphic differential invariants for an ellipsoidal real hypersurface}.
	\newblock {\em Duke Math. J.}, 104(3):463--475, 2000.
\end{thebibliography}
\end{document}